\documentclass[final]{siamltex}

\usepackage{amsmath}
\usepackage{amssymb}
\usepackage{graphicx}
\usepackage{color}

\newcommand{\calL}{\mathcal{L}}

\newcommand{\bbR}{\mathbb{R}}
\newcommand{\bbC}{\mathbb{C}}

\newcommand{\ve}{\varepsilon}

\renewcommand{\Re}{\mathfrak{Re}}

\newtheorem{example}{Example}
\newtheorem{remark}{Remark}

\title{Pseudospectral bounds on transient growth
 for higher order and constant delay differential 
 equations\thanks{This material is based upon work
 supported by the National Science Foundation under
 Grant No. DMS-1620038.
}}

\author{
Amanda Hood\thanks{Center for Applied Mathematics, Cornell
    University, Ithaca, NY 14850 ({\tt ah576@cornell.edu})}
\and
David Bindel\thanks{Department of Computer Science, Cornell
    University, Ithaca, NY 14850 ({\tt bindel@cs.cornell.edu})}
}

\begin{document}

\maketitle

\begin{abstract}
 Asymptotic dynamics of ordinary differential equations
 (ODEs)
 are commonly understood by looking at eigenvalues of
 a matrix, and transient dynamics can be bounded above
 and below by
 considering the corresponding pseudospectra. While
 asymptotics for other classes of differential equations
 have been studied using eigenvalues of a (nonlinear)
 matrix-valued function, there are no analogous
 pseudospectral bounds on transient growth. In this
 paper,
 we propose extensions of the pseudospectral results
 for ODEs first to higher order ODEs and then to
 delay differential equations (DDEs) with constant delay.
 Results are illustrated with a discretized partial
 delay differential equation and a model of a semiconductor
 laser with phase-conjugate feedback.
\end{abstract}

\begin{keywords}
nonlinear eigenvalue problems,
pseudospectra,
delay differential equation,
transient dynamics
\end{keywords}

\begin{AMS}
15A18, 
15A42, 
15A60, 
30E20, 
34C11, 
34K12  
\end{AMS}

\pagestyle{myheadings}
\thispagestyle{plain}
\markboth{A. Hood and D. Bindel}
{Pseudospectral bounds on transient growth
       for higher order and constant delay DEs}

\section{Introduction}
\label{sec-intro}

Nonlinear differential equations are often used to
model economic~\cite{bischi2009nonlinear},
biological~\cite{Kubiaczyk2002},
chemical~\cite{lehman1994recycle},
and physical~\cite{gray1994laser} systems.
Often an equilibrium
solution is of interest, and since the equilibrium
will not actually be achieved in practice,
the behavior of nearby solutions is studied.
To make analysis
of this problem more tractable,
one commonly analyzes stability of the linearized
dynamics near the equilibrium.
This is done in terms of
eigenvalues of some matrix or matrix-valued function.
However, linear stability can fail to describe dynamics
in practice.
If solutions to the linearized system can undergo
large transient growth before eventual decay,
as can happen for systems
$\dot{x} = Mx$ when $M$ is
nonnormal~\cite{trefethen2005spectra}, then
the truncated nonlinear terms may become
significant and incite even greater growth,
rendering the linear stability analysis
irrelevant. See~\cite{singler2008},~\cite{green2006}
(the semiconductor laser model which we also study
here)
and~\cite{gebhardt1994} for examples where this
happens.

Throughout this paper we will focus on
autonomous, homogeneous, constant-coefficient linear systems,
which are the type of systems often encountered as
linearizations of nonlinear differential equations.
Work has already been done on pseudospectral upper
and lower bounds on transient dynamics for first-order
ODEs of this type
(see~\cite{trefethen2005spectra}), and those results
inspire the bounds derived here.
Additionally, upper bounds derived using Lyapunov norms
appear in~\cite{hinrichsen2006mathematical}, and a study
of these and more upper bounds, some elementary and
some requiring specific assumptions, is contained
in~\cite{plischke2005}.
As for delay differential equations (DDEs),
an upper bound has been derived based on
Lyapunov-Krasovskii
functionals applied to an operator mapping one
solution segment to the next~\cite{plischke2005};
an approximate pseudospectral lower bound is obtained
in~\cite{green2006} by discretization of the
associated infinitesimal generator as
in~\cite{bellen2000} to reduce to the ODE case;
and in~\cite{lehman1992transient} changes in the
time-average of a solution under changes to the
model are used to infer effects on transient
behavior. As far as the authors are aware, there
has been no work extending the pseudospectral
bounds in~\cite{trefethen2005spectra} to equations
beyond first-order ODEs. Our extension consists
of replacing the resolvent of a matrix, which
plays the key role in the first-order ODE results,
with the generalized resolvent of a matrix-valued
function which naturally appears in the same way.
This idea is straightforward, but a useful
implementation depends on the details of the problem
at hand. Therefore, rather than state a general
result at the cost of introducing an ungainly
and narrowly applicable set of assumptions, we
apply the main principle
to higher order ODEs and somewhat less directly
to DDEs with constant delay. We hope to motivate
the use of this idea in various other situations,
in which the necessary assumptions will be taken
into account as needed.


The rest of this paper is organized as follows.
In Section 2 we reproduce the transient growth bounds
for first-order ODEs and give the necessary background
on nonlinear matrix-valued functions and pseudospectra.
In Section 3 we make the direct extension to
higher-order ODEs and introduce our main example, a model
of a semiconductor laser with phase-conjugate feedback.
Section 4 contains our main theorem, an upper bound
for transient growth for DDEs, and its application
to a discretized partial DDE and to the laser example.
In the penultimate section, we give a practical lower
bound on worst-case transient growth and show its
effectiveness on both our examples. Finally, we
conclude in Section 6.

\section{Preliminaries}
\label{sec-transbnds-prelim}


The essential ingredient used in~\cite{trefethen2005spectra}
to derive bounds on transient growth for ODEs $\dot{x} = Mx$
is the contour integral relationship between
the solution propagator $e^{tM}$ and the matrix resolvent
$(zI-M)^{-1}$. To
paraphrase~\cite[Theorem 15.1]{trefethen2005spectra},
\begin{equation}\label{eq-resM}
 (zI-M)^{-1} = \int_0^\infty e^{-zt}e^{tM}\,dt
\end{equation}
for $\Re z$ sufficiently large, and
\begin{equation}\label{eq-etM}
 e^{tM} = \frac{1}{2\pi i}
         \int_{\Gamma} e^{zt} (zI-M)^{-1}\,dz
\end{equation}
where $\Gamma$ is a contour enclosing the eigenvalues of $M$.
The first equation means that
$\lbrace\mathcal{L}x\rbrace(z) = (zI-M)^{-1}x_0$
for any solution with initial condition $x(0) = x_0$ and
$\Re z$ sufficiently large, $\mathcal{L}$ being the
Laplace transform operator. The second equation is an
inverse Laplace transform. By exploiting the Mellin
inversion theorem~\cite{weinberger2012pde}, we will
be able to use similar integral equations to achieve
the desired bounds for more general problems. But first we will collect the
necessary terms, state the theorems from~\cite{trefethen2005spectra}
we wish to extend, and give some background on nonlinear
matrix-valued functions and pseudospectra.

We start by recalling the definition of the spectra and
pseudospectra
of matrices. The spectrum $\Lambda(M)$ of a matrix $M$
is the set of its eigenvalues.
The $\ve$-pseudospectrum,
denoted by $\Lambda_\ve(M)$, is
the union of the spectra of all matrices $M + E$, where
$\|E\| \le \ve$. An equivalent definition which gives
a different intuition is
$\Lambda_\ve(M) = \lbrace z \in \mathbb{C} :
                  \|(zI-M)^{-1}\| > \ve^{-1}
\rbrace$~\cite{trefethen2005spectra}.
The spectral abscissa $\alpha(M)$ of
$M$ is the largest real part
among any of its eigenvalues, and the pseudospectral
abscissa is correspondingly defined as $\alpha_\ve(M)
= \max \Re \Lambda_\ve(M)$. The spectral abscissa of $M$
determines asymptotic growth,
and the next theorem shows the role of the
pseudospectral
abscissa in transient growth. Its usefulness
is most apparent when $\alpha(M) < 0$.
\begin{theorem}[based on~\cite{trefethen2005spectra}, Theorem 15.2]
\label{thm-trefupper}
If $M$ is a matrix and $L_\ve$ is
the arc length of the boundary of $\Lambda_\ve(M)$
(or the convex hull of $\Lambda_\ve(M)$)
for some $\ve > 0$, then
\begin{equation}\label{eq-trefupper}
\|e^{tM}\| \le \frac{L_\ve e^{t\alpha_\ve(M)}}{2\pi\ve}
\qquad \forall t\ge 0.
\end{equation}
\end{theorem}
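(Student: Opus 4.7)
The plan is to apply the Dunford–Cauchy integral representation (\ref{eq-etM}) with a cleverly chosen contour. Since the boundary $\partial \Lambda_\ve(M)$ encloses $\Lambda(M)$ (and so does the boundary of its convex hull, since the convex hull contains $\Lambda_\ve(M)$ which in turn contains $\Lambda(M)$), either is an admissible choice of $\Gamma$ in (\ref{eq-etM}). I will take $\Gamma$ to be whichever curve has arc length $L_\ve$.

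Next I would bound the integrand pointwise on $\Gamma$. Taking norms under the integral gives
\begin{equation*}
\|e^{tM}\| \le \frac{1}{2\pi}\int_\Gamma |e^{zt}|\,\|(zI-M)^{-1}\|\,|dz|.
\end{equation*}
Two bounds then do the work. First, every $z \in \Gamma$ satisfies $\Re z \le \alpha_\ve(M)$: for $\Gamma = \partial \Lambda_\ve(M)$ this is immediate from the definition of the pseudospectral abscissa, and for the convex hull version it follows because the half-plane $\{\Re z \le \alpha_\ve(M)\}$ is convex and contains $\Lambda_\ve(M)$. Hence $|e^{zt}| \le e^{t\alpha_\ve(M)}$ for $t \ge 0$. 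Second, $\|(zI-M)^{-1}\| \le \ve^{-1}$ on $\Gamma$: for $\Gamma = \partial \Lambda_\ve(M)$ this is the equivalent characterization of the pseudospectrum recalled in the preliminaries, and for the convex hull case each point of $\partial(\mathrm{conv}\,\Lambda_\ve(M))$ is either in $\partial \Lambda_\ve(M)$ or exterior to $\Lambda_\ve(M)$, where by definition $\|(zI-M)^{-1}\| \le \ve^{-1}$.

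Combining the two pointwise bounds with $\int_\Gamma |dz| = L_\ve$ gives
\begin{equation*}
\|e^{tM}\| \le \frac{1}{2\pi}\cdot e^{t\alpha_\ve(M)} \cdot \ve^{-1} \cdot L_\ve,
\end{equation*}
which is exactly (\ref{eq-trefupper}).

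I expect no serious obstacle, since the argument is a direct application of the resolvent–semigroup correspondence. The one thing that requires care is justifying that $\partial \Lambda_\ve(M)$ is a legitimate contour even when $\Lambda_\ve(M)$ consists of several components or is non-smooth: one interprets $\Gamma$ as a union of finitely many rectifiable Jordan curves, each traversed once counterclockwise, whose interiors together cover $\Lambda(M)$. The contour-integral formula (\ref{eq-etM}) still applies, and the pointwise resolvent bound $\ve^{-1}$ holds on each component, so the same estimate goes through unchanged.
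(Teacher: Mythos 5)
Your argument is correct and is essentially identical to the paper's own proof: both apply the contour representation~(\ref{eq-etM}) with $\Gamma$ the boundary of $\Lambda_\ve(M)$ (or its convex hull), bound $|e^{zt}|$ by $e^{t\alpha_\ve(M)}$ and the resolvent norm by $\ve^{-1}$ on $\Gamma$, and identify $\int_\Gamma |dz|$ with $L_\ve$. Your extra remarks justifying the convex-hull case and the interpretation of $\Gamma$ as a union of rectifiable Jordan curves are sound refinements of details the paper leaves implicit.
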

\begin{proof}
Let $\Gamma$ be the boundary of $\Lambda_\ve(M)$
(or its convex hull).
Since $\Lambda_\ve(M)$ contains the spectrum of $M$
for every $\ve > 0$, $\Gamma$ contains the spectrum
of $M$. Therefore we can
use the representation~(\ref{eq-etM}) for $e^{tM}$.
On $\Gamma$, $|e^{zt}| \le e^{t\alpha_\ve(M)}$ and
$\|(zI - M)^{-1}\| \le \ve^{-1}$. Taking norms
in~(\ref{eq-etM}), we then have
\[
 \|e^{tM}\| \le \frac{1}{2\pi} e^{t\alpha_\ve(M)}
  \ve^{-1} \int_{\Gamma} |dz|,
\]
and the theorem follows by observing that
$L_\ve = \int_{\Gamma} |dz|$.
\end{proof}

In addition to upper bounds, pseudospectra also give
lower bounds on the maximum achieved by $\|\exp(Mt)\|$,
as in this theorem paraphrased from~\cite[Theorem 15.5]{
trefethen2005spectra}:
\begin{theorem}\label{thm-treflower}
Let $M$ be a matrix and let $\omega \in \bbR$ be fixed.
Then $\alpha_\ve(M)$ is finite for
each $\ve > 0$ and
\[
 \sup_{t \ge 0} \|e^{-\omega t}\exp(tM)\| \ge \frac{\alpha_\ve(M)-\omega}{\ve}
 \qquad\forall\ve > 0.
\]
\end{theorem}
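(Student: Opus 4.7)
The plan is to adapt the standard Trefethen--Embree argument, which uses the Laplace transform identity (\ref{eq-resM}) rather than the inverse Laplace transform (\ref{eq-etM}) used for the upper bound. The guiding idea: if the propagator $e^{tM}$ does not grow too fast after discounting by $e^{\omega t}$, then the resolvent cannot be too large in the half-plane $\Re z > \omega$, which limits how far to the right the pseudospectrum can reach.

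First I would dispose of finiteness of $\alpha_\ve(M)$. Since $(zI-M)^{-1} = z^{-1}(I - M/z)^{-1}$, the resolvent norm goes to $0$ as $|z|\to\infty$; in particular $\|(zI-M)^{-1}\| \le \ve^{-1}$ once $|z|$ is large enough, so $\Lambda_\ve(M)$ is bounded and $\alpha_\ve(M) < \infty$. Next, I would dispose of the easy case $\alpha_\ve(M) \le \omega$: the right-hand side is then nonpositive, while the left-hand side is at least $\|I\| \ge 1$ (evaluating at $t=0$), so there is nothing to prove. Similarly, if $K(\omega) := \sup_{t\ge0}\|e^{-\omega t}\exp(tM)\|$ is infinite, the claimed inequality is trivial. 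So the substantive case is $\alpha_\ve(M) > \omega$ and $K(\omega) < \infty$.

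Under these assumptions, $\|e^{tM}\| \le K(\omega) e^{\omega t}$, so the integral in (\ref{eq-resM}) converges absolutely for every $z$ with $\Re z > \omega$, and
\[
 \|(zI-M)^{-1}\| \;\le\; \int_0^\infty e^{-(\Re z - \omega) t}\,\|e^{-\omega t}\exp(tM)\|\,dt
 \;\le\; \frac{K(\omega)}{\Re z - \omega}.
\]
Now pick a sequence $z_n \in \Lambda_\ve(M)$ with $\Re z_n \to \alpha_\ve(M)$; since $\alpha_\ve(M) > \omega$, we have $\Re z_n > \omega$ for large $n$. By definition of $\Lambda_\ve(M)$, $\|(z_n I - M)^{-1}\| > \ve^{-1}$, so combining with the displayed inequality yields $\ve^{-1} < K(\omega)/(\Re z_n - \omega)$, i.e.\ $K(\omega) > (\Re z_n - \omega)/\ve$. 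Letting $n\to\infty$ gives $K(\omega) \ge (\alpha_\ve(M) - \omega)/\ve$, which is the desired bound.

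The only subtle point is the interplay between the domain of convergence of the Laplace integral and the location of the pseudospectrum: the bound on $\|(zI-M)^{-1}\|$ holds only for $\Re z > \omega$, but it is exactly in this half-plane that we need to reach the pseudospectral abscissa, so the two cases outside this regime ($\alpha_\ve(M) \le \omega$ and $K(\omega) = \infty$) must be handled separately as trivial. Aside from that bookkeeping, the argument is a direct application of the resolvent/Laplace-transform duality and will serve as the template for the analogous lower bounds we will establish later for higher order ODEs and for constant delay DDEs.
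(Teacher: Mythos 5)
Your proposal is correct and follows essentially the same route as the paper: establish finiteness of $\alpha_\ve(M)$ via resolvent decay at infinity, dispose of the trivial case $\omega \ge \alpha_\ve(M)$, and use the Laplace-transform identity (\ref{eq-resM}) to bound the resolvent by $K(\omega)/(\Re z - \omega)$ at pseudospectral points with real part approaching $\alpha_\ve(M)$. The only cosmetic difference is that you take a limit along a sequence $z_n$ where the paper phrases the final step as a contrapositive over all admissible constants $C$; the content is identical.
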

\begin{proof}
Letting $\ve > 0$ be arbitrary,
$\alpha_\ve(M)$ is finite because
\[
 \|(zI-M)^{-1}\| =
 |z|^{-1}\,\|\sum_{n=0}^\infty \left(z^{-1}M\right)^n \|
 \le \frac{|z|^{-1}}{1 - |z|^{-1}\,\|M\|}
\]
is less than $\ve^{-1}$ for $|z|$ sufficiently large.
Thus, the desired bound is trivially
satisfied for $\omega \ge \alpha_\ve(M)$. Therefore we assume
that $\omega < \alpha_\ve(M)$.

Now, let $z \in \Lambda_\ve(M)$ satisfy
$\Re(z) > \alpha(M)$ so that~(\ref{eq-resM})
holds, and further suppose that $\Re(z) > \omega$.
Then $\|(zI-M)^{-1}\| \ge \ve^{-1}$ by definition of
$\Lambda_\ve(M)$, and by~(\ref{eq-resM})
the hypothesis
$\|e^{Mt}\| \le C e^{\omega t}$ for all $t \ge 0$ implies that
$\|(zI - M)^{-1}\| \le \frac{C}{\Re(z) - \omega}$. This in turn
implies that $\Re(z) \le C\ve + \omega$. Since $z$ may be chosen
such that $\Re(z)$ is arbitrarily close to $\alpha_\ve(M)$, it follows
that $\alpha_\ve(M) \le C\ve + \omega$.


 By the contrapositive, if $\alpha_\ve(M) > C\ve + \omega$, then
 \[
 \sup_{t \ge 0}\|e^{-\omega t}e^{Mt}\| \ge \sup \lbrace C : \frac{
 \alpha_\ve(M)- \omega}{\ve} > C \rbrace = \frac{\alpha_\ve(M)-\omega}{\ve}.
 \]
\end{proof}
\begin{remark}
If $\omega = 0$ and $\alpha(M) < 0$, then
the essence of the theorem is that there is some
unit initial condition $x_0$ such that the solution to
$\dot{x} = Mx$, $x(0) = x_0$ satisfies $\|x(t_0)\| \ge
\sup_{\ve > 0} \frac{\alpha_\ve(M)}{\ve}$ at some finite
time $t_0$ before eventually decaying to zero.
One can think of such a solution as a long-lived
``pseudo-mode'' associated with pseudo-eigenvalues
in the right half plane. If the $\ve$-pseudospectrum
extends far into the right half plane for some small
$\ve$, then there must be some solution that exhibits
large transient growth.
\end{remark}

To bound transient growth for the higher-order ODE and DDE cases,
a generalized resolvent $T(z)^{-1}$,
with $T : \Omega \rightarrow \bbC^{n\times n}$
an analytic, nonlinear matrix-valued function, will
play the role that the resolvent $(zI-M)^{-1}$ did above.
We say that $\lambda$ is an
eigenvalue of $T$ if $T(\lambda)$ is singular, and
let $\alpha(T)$ (the pseudospectral abscissa of $T$)
represent the largest real part of any eigenvalue of $T$.
A very general
definition of the $\ve$-pseudospectrum of a matrix-valued
function, and
the one we will use, is
\[
\Lambda_\ve(T) = \lbrace z \in \Omega : \|T(z)^{-1}\| > \ve^{-1}
                \rbrace
\]
(see~\cite{bindelhood} for the motivation behind
this particular definition and references to alternative definitions).
Now we can define the pseudospectral abscissa of $T$ as
\[
 \alpha_\ve(T) = \sup_{z \in \Lambda_\ve(T)} \Re z.
\]
The following proposition is immediate.
\begin{proposition}
If $\|T(z)^{-1}\| \rightarrow 0$ uniformly as
$\Re z \rightarrow \infty$, then $\alpha_\ve(T) < \infty$
for all $\ve > 0$.
\end{proposition}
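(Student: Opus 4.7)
The plan is to unwrap the definitions directly; no real machinery is needed. Fix $\ve > 0$. The goal is to exhibit a real number $R$ such that every $z \in \Lambda_\ve(T)$ satisfies $\Re z \le R$, which by definition of $\alpha_\ve(T)$ as a supremum yields $\alpha_\ve(T) \le R < \infty$.

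To produce such an $R$, I would apply the hypothesis of uniform convergence $\|T(z)^{-1}\| \to 0$ as $\Re z \to \infty$ with the specific tolerance $\eta = \ve^{-1}$. That is, there exists $R \in \bbR$ such that $\|T(z)^{-1}\| < \ve^{-1}$ for every $z \in \Omega$ with $\Re z > R$. The contrapositive of the definition of the pseudospectrum then says that any such $z$ lies outside $\Lambda_\ve(T)$. Equivalently, $\Lambda_\ve(T) \subseteq \{z \in \Omega : \Re z \le R\}$, and therefore $\alpha_\ve(T) = \sup_{z \in \Lambda_\ve(T)} \Re z \le R$, which is finite.

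There is no real obstacle here: the statement is essentially a one-line consequence of the definition of the $\ve$-pseudospectrum combined with the hypothesis. The only minor point worth noting is that $\Lambda_\ve(T)$ could in principle be empty, in which case the supremum is conventionally $-\infty$ and the conclusion still holds. Since $\ve > 0$ was arbitrary, this proves $\alpha_\ve(T) < \infty$ for all $\ve > 0$.
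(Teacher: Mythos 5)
Your argument is correct and is exactly the one-line unwinding of definitions that the paper has in mind when it declares the proposition ``immediate'' without proof. Nothing further is needed.
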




\section{Upper bounds for higher-order ODEs}
\label{sec-upperHODEs}

In this section we treat equations of the form
\begin{equation}\label{eq-hode}
 y^{(n)} = \sum_{j=0}^{n-1} A_j y^{(j)}
\end{equation}
with initial conditions $y^{(j)}(0) = y_0^{(j)}$,
$j = 0, ..., n-1$, and
where each $A_j \in \mathbb{C}^{k \times k}$.

We can solve~(\ref{eq-hode}) by writing it in
first-order form, e.g., $\dot{x} = Mx$,
$x = [y, \dot{y}, ..., y^{(n-1)}]^T$, where
\begin{equation}
 M = \begin{bmatrix}
     0   & I   & 0   & \hdots & 0 \\
     0   & 0   & I   & \hdots & 0 \\
     0   & 0   & 0   & \ddots & 0 \\
     0   & 0   & 0   & \hdots & I \\
     A_0 & A_1 & A_2 & \hdots & A_{n-1}
     \end{bmatrix}.
\end{equation}
Then Theorem~\ref{thm-trefupper} can be applied to the
solutions $x(t) = e^{Mt}x(0)$ in order to bound $y(t)$,
since $\|y(t)\| \le \|x(t)\|$.
But since the maximum
reached by $\|x(t)\|$ could be much larger than the maximum of
$\|y(t)\|$, one can hardly expect to obtain a tight bound
for $\|y(t)\|$ with this process.
With the next
theorem, we can bound $y(t)$ directly.
\begin{theorem}
\label{thm-schurUB}
 Let the equations in $y$ and $x$ be as above, with
 $M$ partitioned as
 \[
  M = \begin{bmatrix}0 & B\\ C & D\end{bmatrix},
  \qquad D\text{ square}.
 \]
 Assume $y_0^{(j)} = 0$ for $j = 1,...,n-1$.
 Then $y(t) = \Psi(t) y(0)$ with
 \[
  \|\Psi(t)\| \le \frac{L_\ve e^{\alpha_\ve(T)t}}{2\pi\ve}\qquad \forall\ve > 0,
 \]
 where $T(z) = zI - B(zI-D)^{-1}C$ and $L_\ve$ is the arc length of
 the $\ve$-pseudospectrum of $T$. The bound is finite for every $\ve > 0$.
\end{theorem}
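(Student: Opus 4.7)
The plan is to mimic the derivation of Theorem~\ref{thm-trefupper}, but to apply the Laplace-transform / Mellin-inversion mechanism to the $y$-component alone rather than to the full state $x$. Since $x(t) = e^{tM}x(0)$, Laplace-transforming yields $X(z) = (zI-M)^{-1}x(0)$. Under the hypothesis $y_0^{(j)} = 0$ for $j \ge 1$, the initial state has the block form $x(0) = [y_0^T, 0]^T$ with respect to the partition of $M$, so the Laplace transform $Y(z)$ of $y(t)$ is simply
\[
Y(z) \;=\; \bigl[(zI-M)^{-1}\bigr]_{11}\, y_0.
\]
The first technical step is to identify this $(1,1)$ block: applying the Schur complement formula for the upper-left block of the inverse of $zI - M = \begin{bmatrix} zI & -B \\ -C & zI-D\end{bmatrix}$ gives $\bigl[(zI-M)^{-1}\bigr]_{11} = \bigl(zI - B(zI-D)^{-1}C\bigr)^{-1} = T(z)^{-1}$, valid whenever $z$ lies outside the spectra of $D$ and of $M$. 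So $\Psi(t)$ is characterized by $(\calL\Psi)(z) = T(z)^{-1}$.

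Next, inverting this Laplace transform via a Bromwich contour and deforming the contour to $\Gamma = \partial \Lambda_\ve(T)$, I would write
\[
\Psi(t) \;=\; \frac{1}{2\pi i}\int_{\Gamma} e^{zt}\, T(z)^{-1}\,dz.
\]
The deformation is legitimate because $\Lambda_\ve(T)$ contains the spectrum of $T$, which is where $T(z)^{-1}$ has its only singularities. On $\Gamma$ we have $\|T(z)^{-1}\| \le \ve^{-1}$ by the definition of the $\ve$-pseudospectrum, and $|e^{zt}| \le e^{\alpha_\ve(T)t}$ by the definition of the pseudospectral abscissa, so the standard ML-estimate applied to the integral gives
\[
 \|\Psi(t)\| \;\le\; \frac{L_\ve\, e^{\alpha_\ve(T)t}}{2\pi\ve},
\]
which is the claimed bound.

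The main obstacle is making this contour argument rigorous; specifically, I need to know that $\Lambda_\ve(T)$ is a bounded set with a rectifiable boundary, so that $L_\ve < \infty$ and $\alpha_\ve(T) < \infty$ for every $\ve > 0$. For this I would analyze $T(z) = zI - B(zI-D)^{-1}C$ as $|z|\to\infty$: since $\|B(zI-D)^{-1}C\| = O(|z|^{-1})$, $T(z)$ is dominated by $zI$ for large $|z|$, whence $\|T(z)^{-1}\| = O(|z|^{-1}) \to 0$ uniformly. In particular the proposition of Section~\ref{sec-transbnds-prelim} yields $\alpha_\ve(T) < \infty$, and the same uniform decay shows that $\Lambda_\ve(T)$ is contained in a disk of finite radius, so its boundary has finite arc length. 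A secondary subtlety to check is the justification of the contour deformation from the Bromwich line to $\Gamma$: by the same decay estimate, the integrand vanishes sufficiently fast at infinity to close the contour, and the only singularities swept out in the deformation lie inside $\Gamma$.
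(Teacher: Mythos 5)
Your proposal is correct and follows essentially the same route as the paper: identify $T(z)^{-1}$ as the $(1,1)$ block of $(zI-M)^{-1}$ via the Schur complement, represent $\Psi(t)$ as a contour integral of $e^{zt}T(z)^{-1}$ over $\partial\Lambda_\ve(T)$, apply the ML estimate, and deduce finiteness of $L_\ve$ and $\alpha_\ve(T)$ from $\|T(z)^{-1}\|\to 0$ as $|z|\to\infty$. The only cosmetic difference is that you pass through the Laplace transform and a Bromwich-contour deformation, while the paper directly restricts the Cauchy-integral representation~(\ref{eq-etM}) of $e^{Mt}$ to the first block, which avoids the deformation step you flag as a subtlety.
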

\begin{proof}
 Let $E_1$ represent the first
 $k$ columns of the $nk\times nk$ identity. Then
 the initial condition in $x$ is $x(0) = E_1 y(0)$, so that
 $x(t) = e^{Mt}E_1 y(0)$ and hence $y(t) = E_1^T e^{Mt} E_1 y(0)$.
 Therefore we define $\Psi(t):= E_1^T e^{Mt} E_1$.
 From the integral representation~(\ref{eq-etM}) for $e^{Mt}$, we have
 \[
  \Psi(t) = \frac{1}{2\pi i}
            \int_\Gamma
            \underbrace{E_1^T (zI-M)^{-1} E_1}_{T(z)^{-1}}
  e^{zt}\,dz.
 \]

 As in the proof of Theorem~\ref{thm-treflower},
for $|z|$ large enough
 $\|T(z)^{-1}\| \le \ve^{-1}$.
 Therefore
 $\Lambda_\ve(T)$ is bounded for every $\ve > 0$.
 Therefore $L_\ve$ and $\alpha_\ve(T)$ are both finite
 and the result follows as in
 Theorem~\ref{thm-trefupper}.
\end{proof}
\begin{remark}
 Bounds for similar objects can be found in~\cite{plischke2005}
 in the section ``Kreiss Matrix and Hille-Yosida Generation Theorems,''
 where structured $(M,\beta)$-stability is considered.
\end{remark}

 The assumption $y^{(j)}(0) = 0$ for $j = 1,...,n-1$
 was not essential, as the following corollary
 shows.
 \begin{corollary}
 If $y$ satisfies $y^{(n)} = \sum_{j=0}^{n-1} A_j y^{(j)}$,
 with initial condition
 $y^{(j)}(0) = y_0^{(j)}$ for $j = 0,1,...,n-1$, then
 \[
  \|y(t)\| \le \sum_{j=0}^{n-1}
               \frac{L_\ve^{(j)} e^{\alpha_\ve(T_j)t}}
                    {2\pi \ve} \|y_0^{(j)}\|,
  \qquad \forall \ve > 0,
 \]
 where $L_\ve^{(j)}$ is the arclength of the boundary of
 $\Lambda_\ve(T_j)$, $T_j(z)^{-1} = E_1^T (zI-M)^{-1} E_{j+1}$,
 and $E_{j+1}$ is the $j+1$-th block column of the $nk \times nk$ identity
 partitioned into $n$ block columns.
\end{corollary}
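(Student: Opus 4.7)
The strategy is to reduce to Theorem \ref{thm-schurUB} by splitting the initial condition along the block columns of the identity. With the same first-order reformulation $\dot x = Mx$, the general initial state decomposes as $x(0) = [y_0^{(0)};\,y_0^{(1)};\,\ldots;\,y_0^{(n-1)}] = \sum_{j=0}^{n-1} E_{j+1}\, y_0^{(j)}$. Applying $E_1^T e^{Mt}$ and using linearity yields
\[
y(t) = \sum_{j=0}^{n-1} \Psi_j(t)\, y_0^{(j)}, \qquad \Psi_j(t) := E_1^T e^{Mt} E_{j+1},
\]
whence $\|y(t)\| \le \sum_{j=0}^{n-1} \|\Psi_j(t)\|\, \|y_0^{(j)}\|$ by the triangle inequality. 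Thus it suffices to prove, for each $j$, the single-term estimate $\|\Psi_j(t)\| \le L_\ve^{(j)} e^{\alpha_\ve(T_j) t}/(2\pi\ve)$.

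For this I would repeat the proof of Theorem \ref{thm-schurUB} essentially verbatim. Substituting the representation \eqref{eq-etM} for $e^{Mt}$ into $\Psi_j$ gives
\[
\Psi_j(t) = \frac{1}{2\pi i}\int_\Gamma E_1^T(zI-M)^{-1} E_{j+1}\, e^{zt}\, dz = \frac{1}{2\pi i}\int_\Gamma T_j(z)^{-1}\, e^{zt}\, dz,
\]
where $\Gamma$ can be taken to be the boundary of $\Lambda_\ve(T_j)$ (or of its convex hull). This choice of contour is legitimate because every singular point of $T_j(z)^{-1}$ lies inside $\Lambda_\ve(T_j)$ (the norm blows up there), and any pole of $(zI-M)^{-1}$ that is not a singularity of $T_j(z)^{-1}$ is removable and contributes nothing. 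Bounding the integrand on $\Gamma$ by $\ve^{-1} e^{\alpha_\ve(T_j) t}$ in the usual way then produces the claimed inequality.

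The only item that does not transfer literally from Theorem \ref{thm-schurUB} is the verification that $L_\ve^{(j)}$ and $\alpha_\ve(T_j)$ are finite. Since $E_1$ and $E_{j+1}$ have orthonormal columns, one has $\|T_j(z)^{-1}\| \le \|(zI-M)^{-1}\|$, and the Neumann series estimate used in the proof of Theorem \ref{thm-treflower} shows $\|(zI-M)^{-1}\|\to 0$ as $|z|\to\infty$. Hence $\Lambda_\ve(T_j)$ is bounded for every $\ve > 0$, and no real obstacle arises; the corollary is essentially cosmetic, reducing to Theorem \ref{thm-schurUB} in the case $j=0$ (so that $T_0 = T$) and $y_0^{(j)} = 0$ for $j \ge 1$.
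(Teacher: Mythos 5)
Your proposal is correct and follows the paper's own route: decompose $x(0) = \sum_{j=0}^{n-1} E_{j+1} y_0^{(j)}$, write $y(t)$ as a sum of contour integrals of $T_j(z)^{-1}e^{zt}$, and apply the argument of Theorem~\ref{thm-schurUB} to each summand. The extra details you supply (removability of poles outside $\Lambda_\ve(T_j)$ and boundedness of $\Lambda_\ve(T_j)$ via $\|T_j(z)^{-1}\| \le \|(zI-M)^{-1}\|$) are correct and merely make explicit what the paper leaves implicit.
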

\begin{proof}
From $x(0) = \sum_{j=0}^{n-1} E_{j+1}y_0^{(j)}$, we can
use~(\ref{eq-etM}) to write
 \[
  y(t) = \sum_{j=0}^{n-1} \frac{1}{2\pi i}
                        \int_\Gamma T_j(z)^{-1} e^{zt}\,dz \cdot
                        y^{(j)}(0),
         \qquad
         T_j(z)^{-1} = E_1^T (zI-M)^{-1} E_{j+1}
 \]
 and apply the theorem to each summand.
\end{proof}
\begin{remark}
 We arrive at expressions for each $T_j(z)$
 by taking the Laplace transform of the original
 equation~(\ref{eq-hode}) and expressing $y(t)$ in
 terms of the inverse Laplace transform.
 First, using standard facts about
 the Laplace transform,
 \[
  s^n Y(s) - \sum_{j=0}^{n-1} s^{n-1-j} y_0^{(j)}
  =
  \sum_{k=0}^{n-1} A_k
  \left(s^k Y(s) - \sum_{j=0}^{k-1} s^{k-1-j} y_0^{(j)}\right),
  \qquad Y = \mathcal{L}y.
 \]
 Rearranging,
 \[
  \underbrace{
  \left(s^nI - \sum_{k=0}^{n-1} s^kA_k\right)
             }_{P(s)}  Y(s)
  =
  \sum_{j=0}^{n-1}
  \underbrace{
   \left( s^{n-1-j}I - \sum_{k = j+1}^{n-1} A_k s^{k-1-j}\right)
             }_{X_j(s)}   y_0^{(j)}.
 \]
 Then we recover
 \[
  y(t) = \sum_{j=0}^{n-1} \frac{1}{2\pi i}
         \int_{\Gamma_j} P(z)^{-1}X_j(z)\, e^{zt}\,dz
         \cdot y_0^{(j)}
 \]
 from which we see that $T_j(z)^{-1} = P(z)^{-1}X_j(z)$. Therefore
$T_j(z) = X_j(z)^{-1}P(z)$.
\end{remark}

The last result of this section is the higher-order
difference equation version of the last corollary, and
is a direct extension
of~\cite[Theorem 16.2]{trefethen2005spectra}.
\begin{corollary}\label{thm-diffeqUB}
Suppose $(y_n)$ satisfies the difference equation
$y_{n+1} = \sum_{j=0}^N A_j y_{n-j}$ with initial conditions
$y_0,y_{-1},...,y_{-N}$ given. Then
\[
 \|y_n\| \le \sum_{j=0}^N \frac{L_\ve^{(j)}
             \rho_\ve(T_j)^n}{2\pi\ve} \|y_{0-j}\|
         < \infty
 \qquad
 \forall \ve > 0
\]
where $T_j(z)^{-1} = E_1^T (zI-M)^{-1} E_{j+1}$,
$L_\ve^{(j)}$ is the arclength of the boundary of $\Lambda_\ve(T_j)$,
and $\rho_\ve(T_j) = \max \lbrace |z| : z \in \Lambda_\ve(T_j) \rbrace$ is
the pseudospectral radius.
\end{corollary}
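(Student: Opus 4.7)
The plan is to mimic the ODE argument of Theorem~\ref{thm-schurUB} and its corollary, replacing the continuous propagator $e^{tM}$ by its discrete analog $M^n$ and the integral representation~(\ref{eq-etM}) by the Cauchy formula
\[
 M^n = \frac{1}{2\pi i}\int_{\Gamma} z^n (zI-M)^{-1}\,dz,
\]
where $\Gamma$ encloses the spectrum of $M$ and $M$ is the block companion matrix associated with the recurrence, so that $x_{n+1} = Mx_n$ for $x_n = [y_n, y_{n-1}, \ldots, y_{n-N}]^T$.

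First I would write $x_0 = \sum_{j=0}^{N} E_{j+1} y_{-j}$, which gives
\[
 y_n = E_1^T M^n x_0
     = \sum_{j=0}^{N} \frac{1}{2\pi i}\int_{\Gamma} z^n T_j(z)^{-1}\,dz\cdot y_{-j}
\]
after pulling $E_1^T(\cdot)E_{j+1}$ inside the integral. Next, I would note that $T_j(z)^{-1}$ is a block of $(zI-M)^{-1}$, so its only singularities lie at eigenvalues of $M$, and the set of these singularities is contained in $\Lambda(T_j)\subseteq \Lambda_\ve(T_j)$; hence the integrand $z^n T_j(z)^{-1}$ is analytic outside $\Lambda_\ve(T_j)$, and $\Gamma$ may be deformed to $\Gamma_j:=\partial\Lambda_\ve(T_j)$. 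On $\Gamma_j$ we have $|z|\le \rho_\ve(T_j)$ and $\|T_j(z)^{-1}\|\le \ve^{-1}$, and taking norms under the integral yields the stated bound.

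For finiteness of each summand, I would observe that $\|T_j(z)^{-1}\|\le \|(zI-M)^{-1}\|$ because $T_j(z)^{-1}$ is a submatrix of the resolvent; then the Neumann-series estimate from the proof of Theorem~\ref{thm-treflower} shows $\|(zI-M)^{-1}\|\to 0$ as $|z|\to\infty$, so $\Lambda_\ve(T_j)$ is bounded and both $\rho_\ve(T_j)$ and the arclength $L_\ve^{(j)}$ are finite. The main obstacle is really just the contour-deformation step: one has to be comfortable with the fact that eigenvalues of $M$ which are not actual poles of the particular block $E_1^T(zI-M)^{-1}E_{j+1}$ are removable singularities of the integrand, so only points in $\Lambda(T_j)$ must be enclosed, letting us work with the (possibly much smaller) pseudospectrum $\Lambda_\ve(T_j)$ rather than $\Lambda_\ve(M)$.
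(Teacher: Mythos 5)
Your proposal is correct and follows essentially the same route as the paper: companion linearization, the Cauchy/inverse Z-transform representation $M^n = \frac{1}{2\pi i}\int_\Gamma z^n(zI-M)^{-1}\,dz$, extraction of the blocks $T_j(z)^{-1} = E_1^T(zI-M)^{-1}E_{j+1}$, deformation of the contour to $\partial\Lambda_\ve(T_j)$, and finiteness via $\|T_j(z)^{-1}\| \le \|(zI-M)^{-1}\|$ so that $\Lambda_\ve(T_j)\subset\Lambda_\ve(M)$. Your explicit remarks about removable singularities and the Neumann-series justification of boundedness are slightly more careful than the paper's terse statements, but they are elaborations of the same argument rather than a different approach.
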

\begin{proof}
Putting $x_n = [y_{n-0},y_{n-1},...,y_{n-N}]^T$, we have $x_n = M^n x_0$.
Applying the inverse Z-transform to $z(zI-M)^{-1}$ we
obtain $M^n = \frac{1}{2\pi i}\int_\Gamma z^n(zI-M)^{-1}\,dz$ for $\Gamma$
a contour enclosing the spectrum of $M$. The quantity of interest may
then be expressed as $y_n = E_1^Tx_n = \sum_{j=0}^N E_1^T M^n E_{j+1}y_{0-j}$.
Since $E_1^T M^n E_{j+1} = \frac{1}{2\pi i}\int_{\Gamma_j} z^n T_j(z)^{-1}\,dz$,
the bound for this term follows by taking $\Gamma_j$ equal to the
$\ve$-pseudospectrum of $T_j$. These bounds are finite for any $\ve$ since
$\|(zI-M)^{-1}\| \ge \|T_j(z)^{-1}\|$ for all $z$ implies that
$\Lambda_\ve(T) \subset \Lambda_\ve(M)$, and we know the latter to be
finite.
\end{proof}

Our first example demonstrates the improvement
in bounding the solution to~(\ref{eq-hode}) directly
versus bounding the solution to the first-order form $\dot{x} = Mx$
while simultaneously
motivating the need for the bound in the next section.
\begin{example} \rm
 The model for a semiconductor laser with phase-conjugate feedback
 studied in~\cite{green2006} has an equilibrium at
\[
 (E_x,E_y,N) = (+1.8458171368652383, -0.2415616277234652, +7.6430064479131916)
\]
 after scaling, and linearizing about
this equilibrium yields the DDE
$\dot{y}(t) = A y(t) + B y(t-1)$ where
\footnote{
The equilibrium and linearized system computed here differ slightly
from those in~\cite{green2006}. It appears there were two
 typographical errors and a lack of precision in one or more
 of the given parameters, which led to the parameters,
 linearization, and equilibrium stated
 in~\cite{green2006} being mutually inconsistent. The authors
 have been contacted, and we have attempted to reproduce their
 linearization and equilibrium closely by making the following
 adjustments.
We have put $I = 0.0651 A$ to match
the value
in~\cite{gray1994laser},~\cite{green2002},
and~\cite{krauskopf1998}
cited in~\cite{green2006} as the sources for
the parameters. We have set $\kappa = 4.2\times 10^8 s^{-1}$ so that the
coefficient matrix $B$ for the delay term is the same as
in~\cite{green2006}. Lastly, we have put
$N_{sol} = N_0 + 1/(G_N \tau_p)$ as prescribed
in~\cite{green2002} (which is approximately
the value of $N_{sol}$ stated in~\cite{green2006}).
The equilibrium we have listed above was computed
using Newton's method after the parameter
corrections were implemented, with the equilibrium
stated in~\cite{green2006} as an initial guess.
}
\begin{align*}
 A &= \begin{bmatrix}
  -8.4983\times 10^{-1} &  1.4786\times 10^{-1} &  4.4381\times 10^1 \\
   3.7540\times 10^{-3} & -2.8049\times 10^{-1} & -2.2922\times 10^2 \\
  -1.7537\times 10^{-1} &  2.2951\times 10^{-2} & -3.6079\times 10^{-1}
       \end{bmatrix}, \\
 B &= \begin{bmatrix}
   2.8000\times 10^{-1} &           0 &          0 \\
            0 & -2.8000\times 10^{-1} &          0 \\
            0 &           0 &          0
       \end{bmatrix}.
\end{align*}

Discretizing with $N+1$ points on each unit segment, we can use the
forward Euler approximation to obtain the higher-order difference
equation approximation $y_{j+1} = (I + hA)y_j + hBy_{j-N}$, $h = 1/N$.
Initial conditions $y_{0},y_{0-1},...,y_{0-N}$ come from sampling
the initial condition for the original equation on $[-1,0]$.
We then apply Corollary~\ref{thm-diffeqUB}
with $A_N = hB$, $A_0 = I+hA$, and $A_j = 0$ otherwise.
A companion linearization gives
\[
 \underbrace{ \begin{bmatrix}y_{j+1}\\ \vdots \\ y_{j-N+1}\end{bmatrix}
            }_{x_{n+1}}
 =
 \underbrace{ \begin{bmatrix}I+hA & 0      & \hdots & hB \\
                              I   &        &        & 0    \\
                                  & \ddots &        & \vdots \\
                                  &        &   I    & 0\end{bmatrix}
            }_M
 \underbrace{ \begin{bmatrix}y_j \\ \vdots \\ y_{j-N}\end{bmatrix}
            }_{x_n}.
\]
Here we choose the initial condition $y(t) = 0.0015\times (Ex,Ey,N)^T$ on $[-1,0]$ since a similar initial condition in~\cite{green2006} corresponds to a decaying solution with nontrivial transient growth.

As in the continuous case, with a little manipulation we find that
$T_j(z)^{-1} = P(z)^{-1}X_j(z)$, where $P(z) = z^{N+1}I - (I+hA)z^N - hB$,
$X_0(z) = z^N I$, and $X_j(z) = hBz^{j-1}$ for $j \ge 1$. (In general,
$P(z) = z^{N+1}I - \sum_{j=0}^N A_j z^{N-j}$ and $X_j(z) =
\sum_{k=j}^N A_k z^{N+j-1-k}$ for $j \ge 1$.) Notice that $B$ is
singular and therefore the inverse of $T_j(z)^{-1}$ does not exist for $j \ge 1$.
However, we can
still do $\|T_j(z)^{-1}\| \le \|P(z)^{-1}\| h\|B\|\,|z^{j-1}|$
and obtain bounds by taking
$\Gamma_j = \partial\lbrace z : \|P(z)^{-1}\| h\|B\|\,|z|^{j-1} = \ve^{-1}\rbrace$
for $j \ge 1$. That set is easy to compute in terms of pseudospectra
for $P(z)$. In Figure~\ref{fig-DDEdisc}, we show an upper bound on
$\|y_n\|_2$ from using Corollary~\ref{thm-diffeqUB}, an upper upper bound
on $\|x_n\|_2$ using Theorem~\ref{thm-trefupper}, the 2-norm of the solution
$y_n$ and the 2-norm of the solution $y(t)$ to the continuous DDE.
Notice that as the mesh becomes finer, $y_n$ becomes a better approximation
to $y(t)$ but the upper bound on $\|y_n\|_2$ becomes much more generous.
This is because the spectral radius of $M$ increases with mesh size.
This in turn suggests that a bound which comes directly from the continuous DDE
itself may be more straightforward and effective.

\begin{figure}
\includegraphics[width=0.32\linewidth]{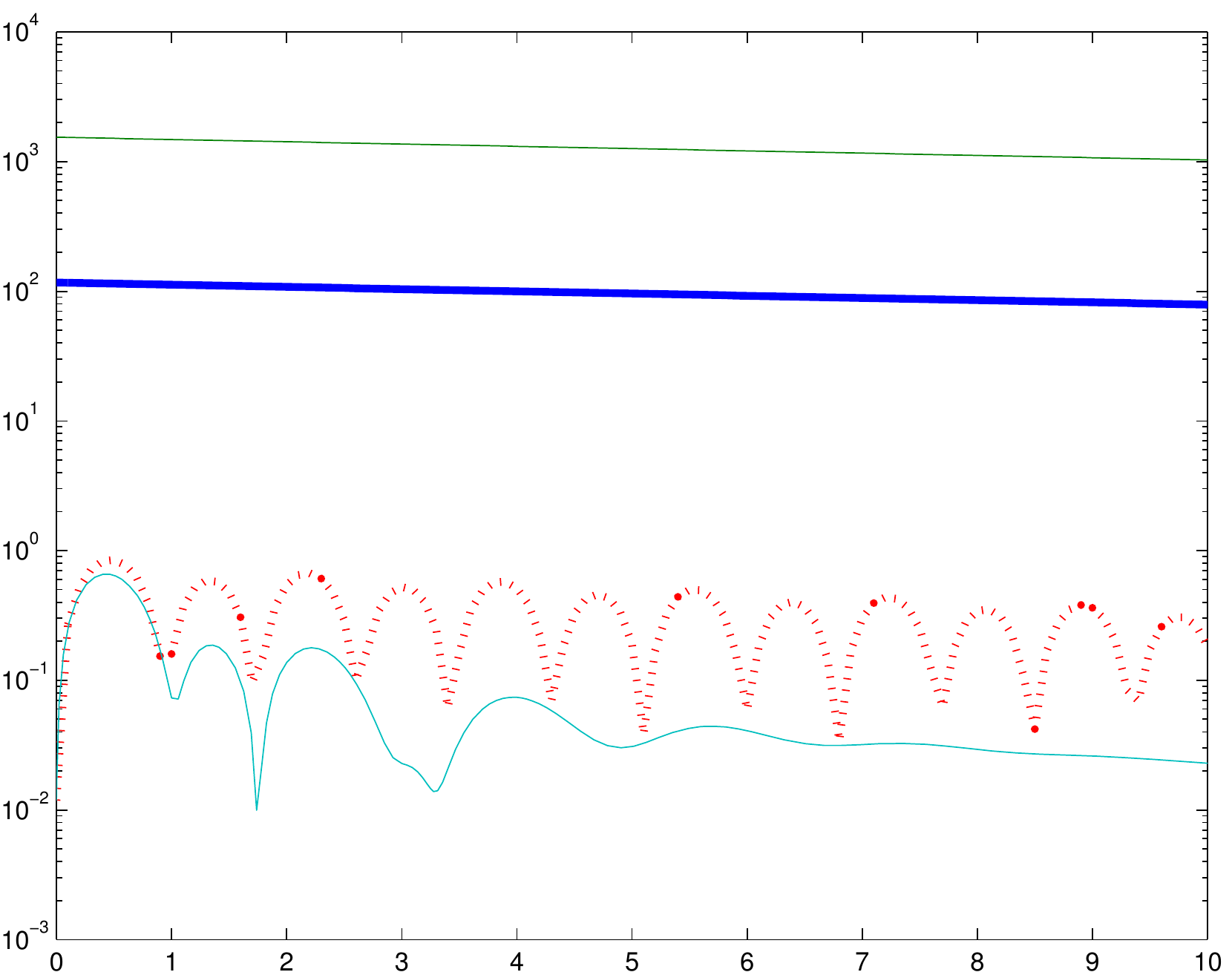} \hfill
\includegraphics[width=0.32\linewidth]{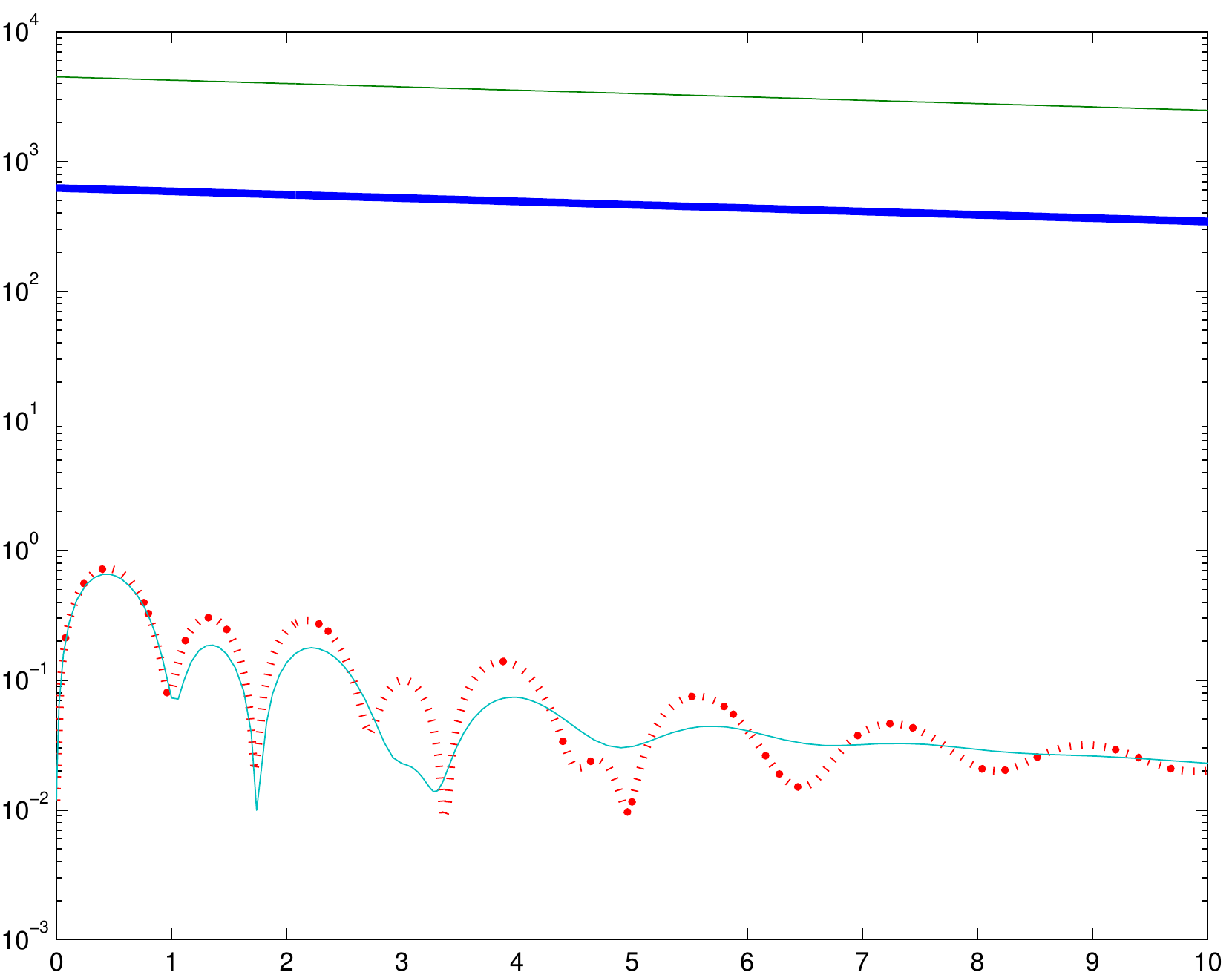} \hfill
\includegraphics[width=0.32\linewidth]{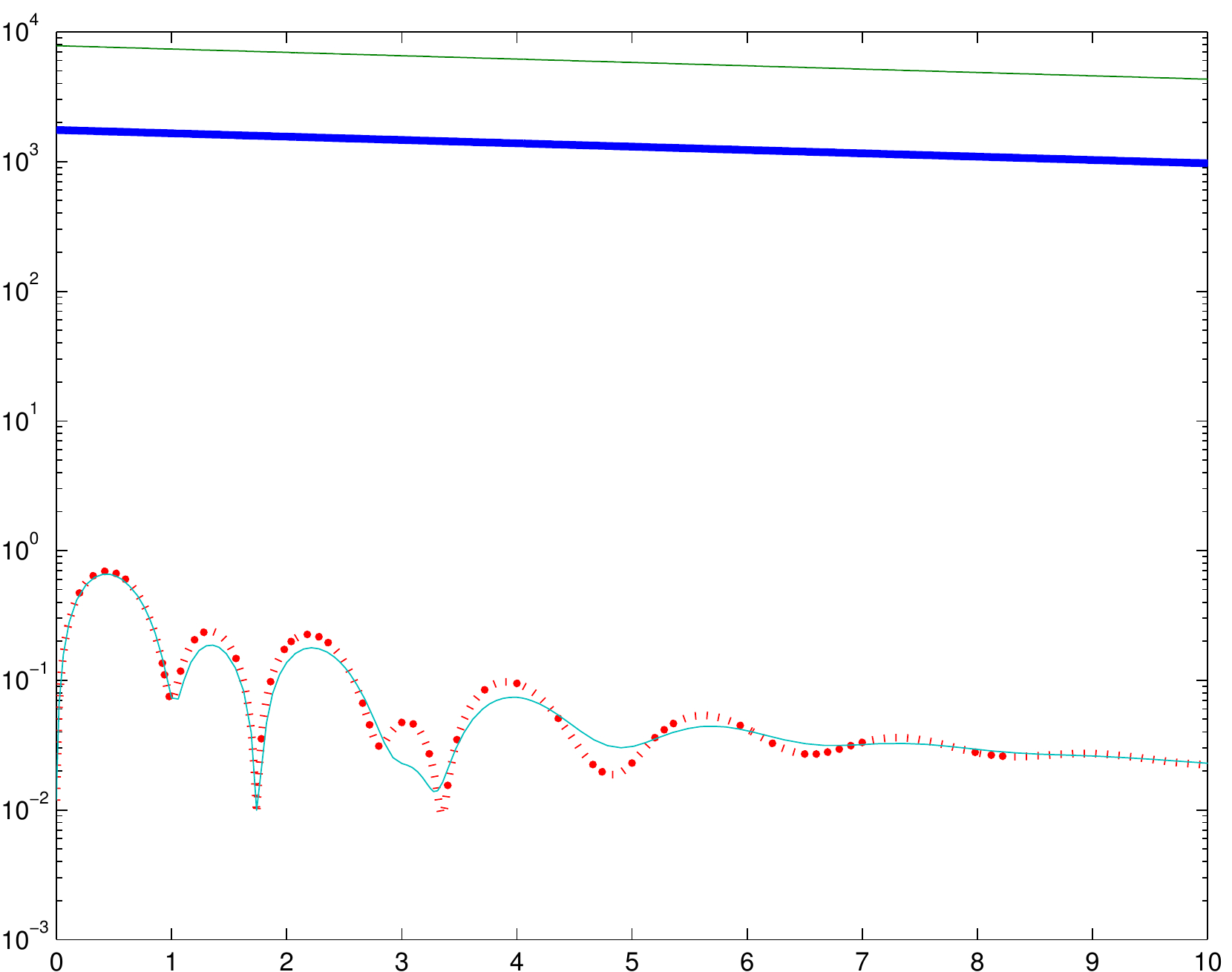}
\caption{
Upper bound from Corollary~\ref{thm-diffeqUB} (thick, solid),
upper bound from Theorem~\ref{thm-trefupper},
2-norm of solution $y(t)$ to continuous DDE computed
with the Matlab dde23 routine, and 2-norm of solution
$y_n$ to discretized equation (thick, dotted) for
$N = 10$ (left), $N =25$ (center), $N = 50$ (right).
}\label{fig-DDEdisc}
\end{figure}
\end{example}

\section{Upper bounds for delay differential equations}
\label{sec-upperDDEs}

Now we turn to transient bounds for DDEs
\begin{equation} \label{eq-dde}
 \dot{u}(t) = Au(t) + Bu(t-\tau)
 \qquad A,B \in \bbC^{n\times n}
\end{equation}
with a single delay $\tau > 0$
and with $\alpha(A)$ and $\alpha(T)$ both negative, where
$T(z) = zI-A-Be^{-\tau z}$ is the characteristic
equation~\cite{MichielsNiculescu}.
Although
we treat only a single delay here, a direct
extension to multiple constant delays is
straightforward.

The characteristic equation $T(z) = zI-A-Be^{-\tau z}$
generally has infinitely
many eigenvalues, as is often the case with
nonlinear matrix-valued
functions. Therefore, unlike in the previous section,
bounds on transient behavior will depend on integrals
whose integration path is unbounded.
So, if we expect a bound for a DDE
to be useful in practice,
then we expect it to
require more preparation (such as locating eigenvalues so
as to find an admissible integration path)
and look more complicated (since the integrand's behavior at
infinity will need to be analyzed) than a bound for an ODE.

Let $\Psi$ be the fundamental solution for the DDE,
that is, the
solution whose initial conditions are zero on $[-\tau,0)$
and the matrix identity $I$ at $t = 0$.
Following the treatment in Chapter 1 of~\cite{hale1993},
we first bound $\Psi$
by invoking the
Mellin inversion theorem~\cite{weinberger2012pde}
and then splitting the characteristic
equation into its linear and
nonlinear parts. We show that
the integration can be taken over a curve more convenient than
the usual vertical one, and finally compute upper bounds
using elementary means.

\begin{lemma}\label{thm-neumann-bound}
 If $X,Y \in \bbC^{n\times n}$ are two matrices, and
 if $\|X^{-1}\|\,\|Y\| < 1$, then
 \[
  \|(X - Y)^{-1}\| \le
  \frac{1}{\|X^{-1}\|^{-1} - \|Y\|}.
 \]
\end{lemma}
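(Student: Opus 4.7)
The plan is to reduce to the standard Neumann series bound by factoring out $X$. First I would write
\[
X - Y = X\bigl(I - X^{-1}Y\bigr),
\]
so that, provided the second factor is invertible, $(X-Y)^{-1} = (I - X^{-1}Y)^{-1} X^{-1}$.

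Next I would verify invertibility of $I - X^{-1}Y$ via the submultiplicativity of the operator norm: $\|X^{-1}Y\| \le \|X^{-1}\|\,\|Y\| < 1$ by hypothesis. This lets me expand in a Neumann series,
\[
(I - X^{-1}Y)^{-1} = \sum_{k=0}^\infty (X^{-1}Y)^k,
\]
from which
\[
\bigl\|(I - X^{-1}Y)^{-1}\bigr\| \le \sum_{k=0}^\infty \|X^{-1}Y\|^k = \frac{1}{1 - \|X^{-1}Y\|} \le \frac{1}{1 - \|X^{-1}\|\,\|Y\|}.
\]

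Combining the factorization with this norm estimate gives
\[
\bigl\|(X-Y)^{-1}\bigr\| \le \bigl\|(I-X^{-1}Y)^{-1}\bigr\|\,\|X^{-1}\| \le \frac{\|X^{-1}\|}{1 - \|X^{-1}\|\,\|Y\|}.
\]
Dividing numerator and denominator by $\|X^{-1}\|$ produces the advertised form $\bigl(\|X^{-1}\|^{-1} - \|Y\|\bigr)^{-1}$. There is no real obstacle here; the only thing to watch is that the hypothesis $\|X^{-1}\|\,\|Y\| < 1$ is exactly what is needed to guarantee both the convergence of the Neumann series and the positivity of the denominator in the stated bound.
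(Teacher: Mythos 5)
Your proposal is correct and follows essentially the same route as the paper: factor $(X-Y)^{-1} = (I - X^{-1}Y)^{-1}X^{-1}$, bound the Neumann series by $(1-\|X^{-1}\|\,\|Y\|)^{-1}$, and rearrange. The only difference is that you spell out the submultiplicativity and convergence steps more explicitly, which the paper leaves implicit.
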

\begin{proof}
 We write $(X - Y)^{-1} = (I - X^{-1}Y)^{-1} X^{-1}$. By
 hypothesis, the Neumann series
 $\sum_{j=0}^\infty (X^{-1}Y)^j$ for $(I - X^{-1}Y)^{-1}$
 converges and is bounded by $(1 - \|X^{-1}\|\,\|Y\|)^{-1}$.
 Therefore $\|(X - Y)^{-1}\|
 \le \|X^{-1}\|\,(1 - \|X^{-1}\|\,\|Y\|)^{-1}$ and the desired
 result follows.
\end{proof}
Lemma~\ref{thm-neumann-bound} can be used to bound
$\|T(z)^{-1}\| = \|(zI - A - Be^{-\tau z})^{-1}\|$
in various ways, depending on the choice of norm and the properties
of the matrices $A$ and $B$. For simplicity, in the following
lemma we use $\|\cdot\| = \|\cdot\|_2$ and assume $A$ is
Hermitian.
Generalizations are straightforward. For instance,
the laser example analyzed in this section
does not have $A$ Hermitian and we show
how to apply our results to that case.
\begin{lemma}
\label{thm-DDEgamma}
 Let $T(z) = zI - A - Be^{-\tau z}$ with $A = VDV^\ast$ Hermitian.
 If $y_0$ is a given positive number, and $\eta$ is chosen
 so that
\[
 1 < \eta y_0 < \min\bigg\lbrace \frac{y_0}{\|B\|_2},
                  e^{-\alpha(T)\tau}, e^{-\alpha(A)\tau} \bigg\rbrace,
\]
 then
 \[
  \Gamma = \underbrace{
           \lbrace x(y) + iy: |y| > y_0 \rbrace
                      }_{\Gamma_\infty}
           \cup
           \underbrace{
           \lbrace x_0 + iy: |y| \le y_0, x_0 = x(y_0) \rbrace
                      }_{\Gamma_0},
  \qquad
  x(y) = -\frac{1}{\tau}\log \left(|y| \eta\right)
 \]
 is to the right of both $\Lambda(T)$ and $\sigma(A)$ but lies
 entirely in the left half-plane.
\end{lemma}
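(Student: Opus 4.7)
The plan is to check three separate assertions, each drawing on exactly one of the inequalities bounding $\eta y_0$: (i) $\Gamma \subset \{\Re z < 0\}$, from $\eta y_0 > 1$; (ii) $\Lambda(T)$ lies to the left of $\Gamma$, from $\eta y_0 < e^{-\tau\alpha(T)}$ together with $\eta y_0 < y_0/\|B\|_2$; and (iii) $\sigma(A)$ lies to the left of $\Gamma$, from $\eta y_0 < e^{-\tau\alpha(A)}$. Each is verified piecewise on $\Gamma_0$ and on $\Gamma_\infty$.

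Property (i) is immediate: $\eta y_0 > 1$ gives $x_0 = -\tau^{-1}\log(\eta y_0) < 0$, and on $\Gamma_\infty$ the real part $x(y) = -\tau^{-1}\log(|y|\eta) \le x_0 < 0$ since $|y|\ge y_0$. For the $\Gamma_0$ portions of (ii) and (iii), the inequalities $\eta y_0 < e^{-\tau\alpha(T)}$ and $\eta y_0 < e^{-\tau\alpha(A)}$ become, after taking logs, $x_0 > \alpha(T)$ and $x_0 > \alpha(A)$; so any $z$ with $\Re z \ge x_0$ and $|\Im z| \le y_0$ has real part exceeding that of every eigenvalue of $T$ and every eigenvalue of $A$, making both $T(z)$ and $zI - A$ invertible there.

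The substantive step is the $\Gamma_\infty$ portion of (ii); (iii) is free on $\Gamma_\infty$ since $\Im z \ne 0$ while $\sigma(A) \subset \bbR$. For (ii) I plan to invoke Lemma~\ref{thm-neumann-bound} on the splitting $T(z) = (zI - A) - Be^{-\tau z}$. The Hermitian hypothesis $A = V D V^\ast$ gives the sharp resolvent bound
\[
 \|(zI-A)^{-1}\|_2 = \max_k |z - d_k|^{-1} \le |\Im z|^{-1},
\]
since each $d_k$ is real. At the same time, for any $z$ with $\Re z \ge x(|\Im z|)$,
\[
 \|Be^{-\tau z}\|_2 = \|B\|_2\, e^{-\tau \Re z} \le \|B\|_2\, e^{-\tau x(|\Im z|)} = \|B\|_2\, |\Im z|\, \eta.
\]
The product of these two estimates is bounded by $\|B\|_2\eta$, which is strictly less than $1$ by the remaining hypothesis $\eta y_0 < y_0/\|B\|_2$. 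Lemma~\ref{thm-neumann-bound} then delivers invertibility of $T(z)$ throughout this region.

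The only real subtlety, more a point of recognition than difficulty, is seeing that the logarithmic profile $x(y) = -\tau^{-1}\log(|y|\eta)$ is tuned exactly so that the growth of $\|Be^{-\tau z}\|_2$ in $|\Im z|$ cancels the $1/|\Im z|$ decay of $\|(zI-A)^{-1}\|_2$ along $\Gamma_\infty$, leaving a product uniformly bounded by $\|B\|_2\eta < 1$. Once that cancellation is spotted, the proof reduces to matching each of the three assumed inequalities on $\eta y_0$ to the conclusion it unlocks.
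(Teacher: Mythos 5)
Your proof is correct and follows essentially the same route as the paper's: the same splitting $T(z) = (zI-A) - Be^{-\tau z}$ fed into Lemma~\ref{thm-neumann-bound}, the same resolvent bound $\|(zI-A)^{-1}\|_2 \le |\Im z|^{-1}$ from Hermiticity, and the same matching of the three hypotheses on $\eta y_0$ to the three conclusions. The only cosmetic difference is that you apply the Neumann estimate pointwise throughout the closed region to the right of $\Gamma_\infty$, whereas the paper verifies it on $\Gamma_\infty$ itself and then sweeps the contour rightward by letting $\eta$ decrease to zero; both versions close the argument.
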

\begin{proof}
$\Gamma$ is certainly to
the right of $\sigma(A)$, since all eigenvalues of $A$ are real
and the condition $\eta y_0 < e^{-\alpha(A)\tau}$ guarantees $x(y_0) >
\alpha(A)$. The eigenvalues of $T$ are also to the left of
$\Gamma_0$ by the condition $\eta y_0 < e^{-\alpha(T) \tau}$.
As for $\Gamma_\infty$,
$\|(zI - A)^{-1}\|_2^{-1} = \sigma_{\min}(zI - D) \ge |y|$ because
the eigenvalues of $A$ are real.
Hence, if $z \in \Gamma_\infty$, then
$\|(zI - A)^{-1}\|_2^{-1} \ge |y| > \|B\|_2 \eta |y|$ by
the hypothesis $\eta < 1/\|B\|_2$. Therefore,
Lemma~\ref{thm-neumann-bound} applies with $X = zI - A$ and
$Y = Be^{-\tau z}$, so
$\|T(z)^{-1}\| \le \left(|y| - \|B\|_2 \eta |y| \right)^{-1}
< \infty$ on $\Gamma_\infty$.
Since decreasing $\eta$ to
zero moves $\Gamma_\infty$ infinitely to the right, and decreasing $\eta$
does not violate the assumption guaranteeing nonsingularity of $T$ on
$\Gamma_\infty$, it follows that $T$ is nonsingular on and at all points
to the right of $\Gamma_\infty$. Therefore $\Gamma$ is to the right of
$\Lambda(T)$. The condition $1 < \eta y_0$ assures that $\Gamma_0$ is
in the left half-plane, and therefore so is $\Gamma$.
\end{proof}

By our assumption that all eigenvalues of $T$ are in the left
half plane, all
solutions of~(\ref{eq-dde}) are exponentially
stable~\cite[Proposition 1.6]{MichielsNiculescu} and
hence of exponential order. Therefore we can
take the Laplace transform of~(\ref{eq-dde}) to
obtain
\[
 (zI - A - Be^{-\tau z})U(z) = u(0)
 + Be^{-\tau z} \int_{-\tau}^0 e^{-zt}u(t)\,dt,
 \quad U = \calL u.
\]
Since the fundamental solution
satisfies $\Psi(t) = 0$ on $[-\tau,0)$ and
$\Psi(0) = I$, it follows that
$(\mathcal{L}\Psi)(z) = T(z)^{-1}$.
Then
we can use the Mellin inversion
theorem~\cite{weinberger2012pde} to write
\begin{equation}\label{eq-mellin}
 \Psi(t) = \frac{1}{2\pi i} \int_{\gamma + i\mathbb{R}} T(z)^{-1} e^{zt}\,dz
\end{equation}
for any $\gamma > \alpha(T)$. The next lemma
shows that we can integrate over the contour
$\Gamma$ from Lemma~\ref{thm-DDEgamma} rather than
$\gamma + i\bbR$ in~(\ref{eq-mellin}).
\begin{lemma}
\label{thm-DDEcauchy}
 For $\Gamma$ as in Lemma~\ref{thm-DDEgamma} and $\gamma$ such
 that~(\ref{eq-mellin}) holds, we have
 \[
  \int_\Gamma T(z)^{-1} e^{zt} \,dz =
  \int_{\gamma + i\mathbb{R}} T(z)^{-1} e^{zt} \, dz.
 \]
\end{lemma}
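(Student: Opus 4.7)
The plan is to apply Cauchy's theorem on a family of closed contours and then take $R\to\infty$. For $R > y_0$, let $C_R$ be the closed contour consisting of the portion of $\Gamma$ with $|\Im z| \le R$, the portion of $\gamma + i\bbR$ with $|\Im z| \le R$ traversed in the opposite sense, and the two horizontal segments $H_R^\pm$ at heights $\pm iR$ that join their endpoints. By Lemma~\ref{thm-DDEgamma} and the hypothesis $\gamma > \alpha(T)$, both $\Gamma$ and $\gamma + i\bbR$ lie in the set where $T(z)^{-1}$ is analytic; one can arrange $C_R$ to bound a region contained in that set, if necessary by first shifting $\gamma$ to the right of $x_0$, which is allowed by an elementary Cauchy deformation in the eigenvalue-free half-plane $\Re z > \alpha(T)$. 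Cauchy's theorem then gives $\oint_{C_R} T(z)^{-1} e^{zt}\,dz = 0$.

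The main technical step is showing the horizontal pieces $H_R^\pm$ contribute a vanishing integral as $R \to \infty$. On $z = x \pm iR$ with $x(R) \le x \le \gamma$, Hermiticity of $A$ gives $\|(zI-A)^{-1}\|_2 \le 1/R$, and the choice $x \ge x(R) = -\frac{1}{\tau}\log(R\eta)$ gives $\|Be^{-\tau z}\|_2 \le \|B\|_2 R\eta < R$ by the assumption $\eta\|B\|_2 < 1$. Lemma~\ref{thm-neumann-bound} then yields the uniform bound
\[
 \|T(z)^{-1}\|_2 \le \frac{1}{R\,(1 - \eta\|B\|_2)}, \qquad z \in H_R^\pm.
\]
Combining with $|e^{zt}| = e^{xt}$ and integrating in $x$ from $x(R)$ to $\gamma$ produces a contribution of order $e^{\gamma t}/(tR)$ for $t > 0$ (and $(\log R)/R$ for $t = 0$), both of which tend to zero. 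Convergence of the truncated integrals along $\Gamma$ and $\gamma + i\bbR$ to their improper counterparts follows from the same Neumann bound on $\Gamma_\infty$ together with the rapid decay $|e^{zt}| = (|y|\eta)^{-t/\tau}$ for $t > 0$, and from the hypothesis that~(\ref{eq-mellin}) holds. Passing to the limit in Cauchy's identity yields the claim.

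The principal obstacle is the horizontal-segment estimate: since the length of $H_R^\pm$ grows like $\log R$, the $O(1/R)$ pointwise bound on $\|T(z)^{-1}\|_2$ is only just sufficient, and the bound itself requires Lemma~\ref{thm-neumann-bound} to simultaneously accommodate the linear growth of $zI-A$ in $|\Im z|$ and the possible exponential growth of $e^{-\tau z}$ at large negative real part. The minor geometric subtlety of ensuring that $C_R$ encloses no eigenvalues of $T$ is handled by the preliminary deformation noted above.
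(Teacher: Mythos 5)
Your proof is correct and follows essentially the same route as the paper's: Cauchy's theorem applied between the two contours truncated at heights $\pm R$ and closed off by horizontal segments, with the key step being the Neumann-series bound $\|T(z)^{-1}\|_2 \lesssim 1/R$ on those segments (via Lemma~\ref{thm-neumann-bound}, Hermiticity of $A$, and $e^{-\tau x}\le R\eta$) so that their contribution vanishes as $R\to\infty$. Your version is if anything slightly more careful than the paper's --- you integrate $e^{xt}$ exactly instead of invoking the $\log R$ arc length, and you explicitly note the convergence of the truncated integrals and the absence of eigenvalues in the enclosed region --- but the substance is identical.
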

\begin{proof}
Since $T$ has no eigenvalues in the region bounded by
$\gamma + i\bbR$ and $\Gamma$, we only need to show that
the integrals
\[
 \int_{x(y)}^\gamma T(w + iy)^{-1}\, e^{(w+iy)t}\,dw
\]
go to zero as $y \rightarrow \pm\infty$. But
from Lemma~\ref{thm-neumann-bound}
we know $\|T(w+iy)^{-1}\|
\sim \frac{1}{|y|}$ on $x(y) \le w \le \gamma$
as $|y|$ becomes large, and
$|e^{(w+iy)t}| \le e^{\gamma t}$ on the integration
path which itself has arc length $\sim \log(|y|)$.
Therefore
\[
 \Bigg\|\int_{x(y)}^\gamma T(w+iy)^{-1}\, e^{(w+iy)t}\,dw\Bigg\|
\lesssim
 \frac{\log |y|}{|y|} \rightarrow 0
\]
as $|y|$ becomes large, and the lemma is proved.
\end{proof}

We now come to the main result of this section, in which
we bound transient growth of the fundamental solution.
Note that a bound on $\Psi(t)$ for $t \ge \tau$ is all
that is required, since $\Psi(t) = e^{At}$ for $0 \le t < \tau$.
Again, we use the 2-norm, but only for simplicity.
\begin{theorem}
\label{thm-DDEUB}
With the hypotheses of the previous lemmas,
the fundamental solution of $\dot{u}(t) = Au(t) + Bu(t-\tau)$
satisfies the bound
\[
 \|\Psi(t)\|_2 \le \|\exp(At)\|_2
              +  e^{x_0 t} I_0
              +  e^{x_0 t}
                 \frac{C}{t/\tau}
\]
on $t \ge \tau$,
where
\[
 I_0 = \frac{1}{2\pi}\int_{-y_0}^{y_0}
         \| T(x_0 + iy)^{-1} - R(x_0 + iy) \|_2
         dy,\quad
       R(z) = (zI-A)^{-1}
\]
and
\[
 C = \frac{\|B\|_2\eta\sqrt{(\tau y_0)^{-2}+1}}
          {\pi\left(1-\|B\|_2 \eta\right)}.
\]
\end{theorem}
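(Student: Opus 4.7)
The plan is to start from the Mellin/Cauchy integral representation established in Lemmas~\ref{thm-DDEgamma} and~\ref{thm-DDEcauchy},
\[
 \Psi(t) = \frac{1}{2\pi i}\int_\Gamma T(z)^{-1} e^{zt}\,dz,
\]
and to subtract off the ``linear part'' of the integrand. Specifically, since $\Gamma$ lies to the right of $\sigma(A)$ by Lemma~\ref{thm-DDEgamma} and $\|R(z)\|_2 = 1/\min_k|z-\lambda_k(A)| \le 1/|\Im z|$ decays along horizontal cross-sections, the same Cauchy-shift argument used in Lemma~\ref{thm-DDEcauchy} applies to $R(z)e^{zt}$ alone, giving
\[
 e^{At} = \frac{1}{2\pi i}\int_\Gamma R(z)\, e^{zt}\,dz.
\]
Subtracting, I obtain the single integral
\[
 \Psi(t) - e^{At} = \frac{1}{2\pi i}\int_\Gamma \bigl[T(z)^{-1} - R(z)\bigr] e^{zt}\,dz,
\]
which I would then split across $\Gamma_0$ and $\Gamma_\infty$.

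On $\Gamma_0$, the parametrization $z = x_0 + iy$ gives $|e^{zt}| = e^{x_0 t}$ and $|dz| = dy$, so the contribution is exactly $e^{x_0 t} I_0$ by definition. On $\Gamma_\infty$ I would use the resolvent-style identity
\[
 T(z)^{-1} - R(z) = T(z)^{-1} \bigl(Be^{-\tau z}\bigr) R(z),
\]
which follows from $T(z) = R(z)^{-1} - Be^{-\tau z}$. The bounds harvested from the proof of Lemma~\ref{thm-DDEgamma} — namely $\|R(z)\|_2 \le 1/|y|$, $\|T(z)^{-1}\|_2 \le (|y|(1 - \|B\|_2\eta))^{-1}$, and $|e^{-\tau z}| = |y|\eta$ on $\Gamma_\infty$ — then combine to give
\[
 \|T(z)^{-1} - R(z)\|_2 \le \frac{\|B\|_2 \eta}{|y|\,(1 - \|B\|_2\eta)}.
\]

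The remaining work is computational. I parametrize $\Gamma_\infty$ by $y$, so that $|dz| = \sqrt{x'(y)^2+1}\,dy = \sqrt{(\tau y)^{-2}+1}\,dy \le \sqrt{(\tau y_0)^{-2}+1}\,dy$ for $|y| \ge y_0$, and $|e^{zt}| = (|y|\eta)^{-t/\tau}$. Doubling for the two halves of $\Gamma_\infty$ and substituting $u = y\eta$ turns the $y$-integral into
\[
 2 \int_{y_0}^\infty \frac{(y\eta)^{-t/\tau}}{y}\,dy
 = \frac{2(y_0\eta)^{-t/\tau}}{t/\tau}
 = \frac{2 e^{x_0 t}}{t/\tau},
\]
using $x_0 = -\tau^{-1}\log(y_0\eta)$. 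Collecting constants with the $1/(2\pi)$ out front yields the claimed $Ce^{x_0 t}/(t/\tau)$ term. The assumption $t \ge \tau$ ensures $t/\tau > 0$, so this integral converges.

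The main obstacle is the contour-deformation step for $R(z)$: I need to certify the analyticity and decay on the horizontal bridges connecting $\Gamma$ to $\gamma + i\mathbb{R}$, mirroring Lemma~\ref{thm-DDEcauchy}. This is easier here than for $T(z)^{-1}$ because $R(z)$ decays like $1/|y|$ without needing any eigenvalue-location hypotheses. Otherwise everything reduces to careful bookkeeping of the bounds on $\|T(z)^{-1}\|_2$, $\|R(z)\|_2$, $|e^{-\tau z}|$, $|e^{zt}|$, and the arc-length element on $\Gamma_\infty$.
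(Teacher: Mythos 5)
Your proposal is correct and follows essentially the same route as the paper: split off $R(z)$ over $\Gamma$ and evaluate that piece as $e^{At}$, read off the $\Gamma_0$ contribution as $e^{x_0 t}I_0$, and bound the $\Gamma_\infty$ tail using the $1/|y|$ decay of $T(z)^{-1}-R(z)$ together with $|e^{zt}| = (|y|\eta)^{-t/\tau}$ and the arc-length factor $\sqrt{(\tau y_0)^{-2}+1}$. The only cosmetic difference is that you estimate $T(z)^{-1}-R(z)$ via the identity $T(z)^{-1}\bigl(Be^{-\tau z}\bigr)R(z)$ whereas the paper invokes the tail of the Neumann series; both give the same bound $\|B\|_2\eta/\bigl(|y|(1-\|B\|_2\eta)\bigr)$ and the same final constant $C$.
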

\begin{proof}
Since $\Gamma$ was chosen to the right of
all eigenvalues of $A$, the splitting
\[
 \frac{1}{2\pi i} \int_\Gamma T(z)^{-1} e^{zt}\,dz =
 \frac{1}{2\pi i} \int_\Gamma R(z) e^{zt}\,dz +
 \frac{1}{2\pi i} \int_\Gamma \left[ T(z)^{-1} - R(z) \right]
 e^{zt}\, dz
\]
and subsequent evaluation of the first summand as $e^{At}$
is justified, as the Mellin inversion theorem applies
to $R(z)$ for the same reason it applies to
$T(z)^{-1}$.
With $I_0$ as defined in the theorem statement, it
only remains to give a bound on
the second integral in the sum.

From the hypothesis that $A$ is Hermitian we have
that $\|R(z)\|_2 \le |y|^{-1}$, and hence
$\|R(z) B e^{-\tau z}\|_2 \le \|B\|_2\eta$. Therefore the
assumption $\eta y_0 < y_0/\|B\|_2$ implies
$\|R(z) B e^{-\tau z}\|_2 < 1$ on $\Gamma_\infty$, so that
 $T(z)^{-1} - R(z)$ is subject to
the Neumann bound
\[
 \|T(z)^{-1} - R(z)\|_2
  \le |y|^{-1}\|B\|_2 \eta
      \left( 1 - \|B\|_2 \eta \right)^{-1}.
\]
In addition, if $z \in \Gamma_\infty$ then $|z'(y)|
\le \sqrt{ \left(\tau y_0\right)^{-2} +1 }$.
It then follows that
\[
 \bigg\| \frac{1}{2\pi i}
         \int_{\Gamma_\infty} \left[ T(z)^{-1} - R(z) \right]
                         e^{zt}\,dz
 \bigg\|_2
 \le
 e^{x_0 (t-\tau) } \frac{C}{ t/\tau }.
\]
\end{proof}
\begin{remark}\label{rm-DDEUB}
In general, if $A$ is not Hermitian we can still bound
$\|R(z)\|$ simply by splitting $A$ into
its Hermitian and skew-Hermitian parts as $A = H + S$, from which
$\|R(z)\|_2 \le (|y| - \|S\|_2)^{-1}$ if we use the 2-norm.
However $\|R(z)\|$ is bounded
must be taken into account when choosing $\Gamma_\infty$.

Also note that we could have integrated over a vertical
contour at $\Re z = x_0$, because $\|T(x_0+iy)^{-1}-R(x_0+iy)\| \lesssim
|y|^{-2}$ as $|y| \rightarrow \infty$. But then we obtain
\begin{equation}\label{eq-DDEUB-vert}
 \|\Psi(t)\|_2 \le \|\exp(At)\|_2 + e^{x_0 t}I_0 + e^{x_0 t}C,
 \qquad
 C = \frac{\|B\|_2\eta}{\pi(1-\|B\|_2 \eta)}
\end{equation}
and we have lost the $1/t$ dependence
in the third term.

Since $\|T(z)^{-1}\|$ is integrable on $\Gamma_\infty$, we also
could have obtained an upper bound in terms of the integral of
$\|T(z)^{-1}\|$ over $\Gamma_0$ and another term with $1/t$ dependence,
specifically
\begin{equation}\label{eq-DDEUB-nonsplit}
 \|\Psi(t)\|_2 \le e^{x_0 t} \tilde{I_0} + e^{x_0t}\frac{C}{t/\tau},
 \qquad
 \tilde{I_0} = \frac{1}{2\pi}\int_{\Gamma_0} \|T(z)^{-1}\|_2 \,|dz|,
\ \
 C = \frac{\sqrt{(\tau y_0)^{-2} + 1}}{\pi (1-\|B\|_2\eta)}.
\end{equation}
In this version we fail to take advantage of the closed form of
$\int R(z) e^{zt}\,dz$. However, we may be able to shift $x_0$
further to the left since we no longer need to have $\Gamma$ to
the right of the spectrum of $A$, and this will result in
faster decay.
\end{remark}

One consequence of a bound on the fundamental solution is a bound
on worst-case transient behavior of a certain class of solutions,
namely the ones equal to zero on $[-\tau, 0)$ and with an initial
``shock'' condition specified at $t = 0$.

\begin{example} \rm
\label{ex-PDDE}
 Consider the DDE
 \begin{equation}\label{eq-effex}
  \dot{v}(t) = A v(t) + B v(t-\tau), \quad \tau = 0.2
 \end{equation}
 coming from the discretization of a parabolic partial differential
delay equation (adapted from \cite[\S 5.1]{effenberger2013}),
 where $n = 10^2$ and $h = \pi/(n+1)$,
 and $A, B \in \bbC^{n,n}$ are defined by
 \[
  A = \frac{1}{h^2}
        \begin{pmatrix}
          -2 &    1   &    \\
           1 & \ddots &  1 \\
             &    1   & -2
        \end{pmatrix} + \frac{1}{2} I,
  \qquad
  B(j,j) = a_1(jh),\quad
  a_1(x) = -4.1 + x(1-e^{x-\pi}).
 \]
Rather than compute eigenvalues of $T$ to obtain $\alpha(T)$,
we can compute inclusion
regions~\cite[Theorem 3.1]{bindelhood} with the splitting
$T(z) = D(z) + E(z)$,
$D(z) = zI-\Lambda$ diagonal and $E(z) = V^{-1}BV e^{-\tau z}$, where $A = V\Lambda V^{-1}$.
The resulting inclusion regions are plotted in Figure~\ref{fig-PDDE-UB} (left). The rightmost
point of the inclusion regions is then a bound for $\alpha(T)$.
The largest eigenvalues of $A$ are also plotted, and the solid contour
is chosen as in the theorem, with $y_0 = 21.4214$ and $\eta = 0.0491366$,
so that it is to the right of both the eigenvalues of $T$ and the
eigenvalues of $A$. The dashed contour is the vertical alternate integration
path, as referred to in Remark~\ref{rm-DDEUB}. Lastly, if we
use~(\ref{eq-DDEUB-nonsplit}), we can integrate over a contour whose
vertical section is shifted to the left, depicted as the dotted line in
Figure~\ref{fig-PDDE-UB} (left).
\begin{figure}[h]
 \includegraphics[width=0.48\linewidth]{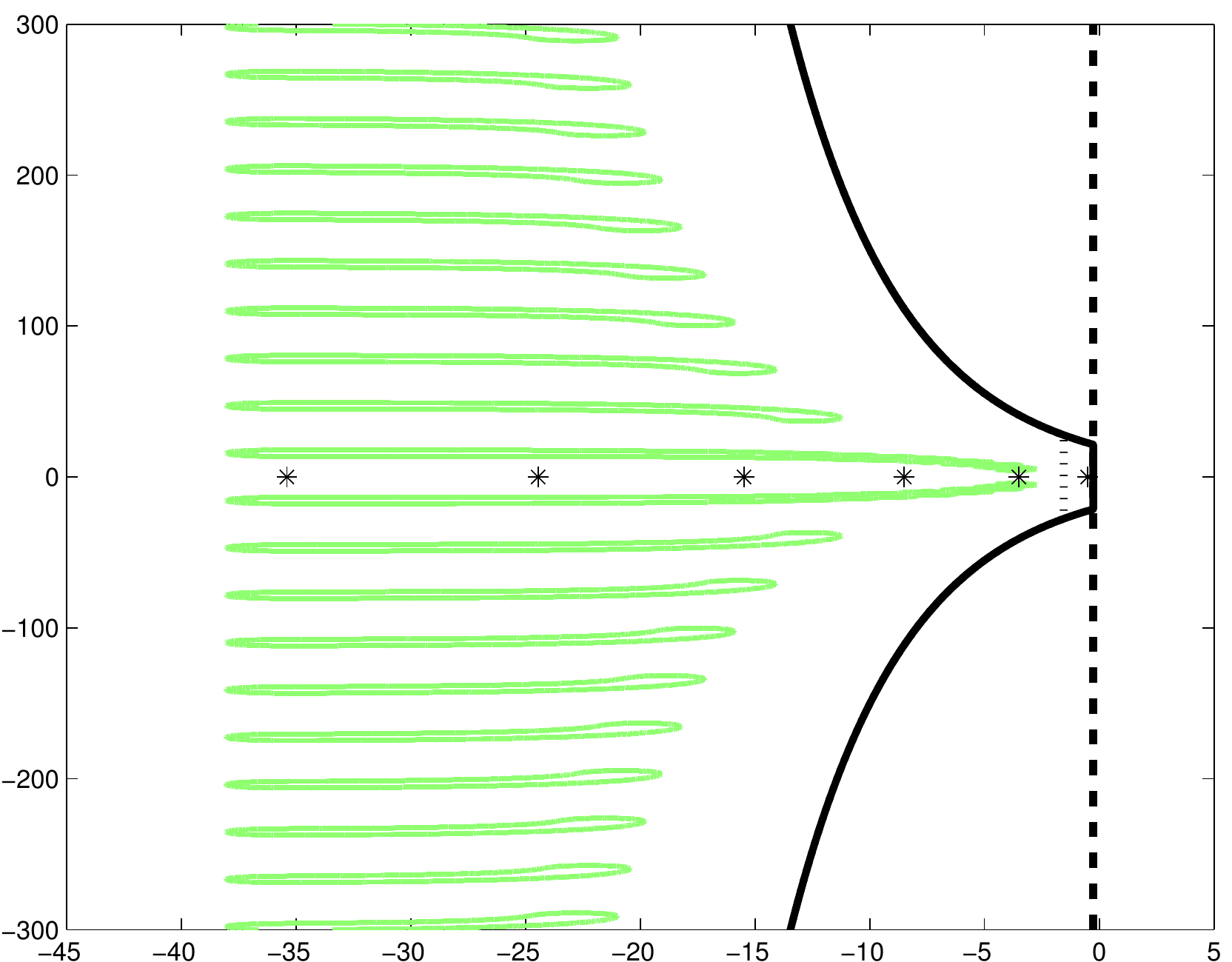}
 \hfill
 \includegraphics[width=0.48\linewidth]{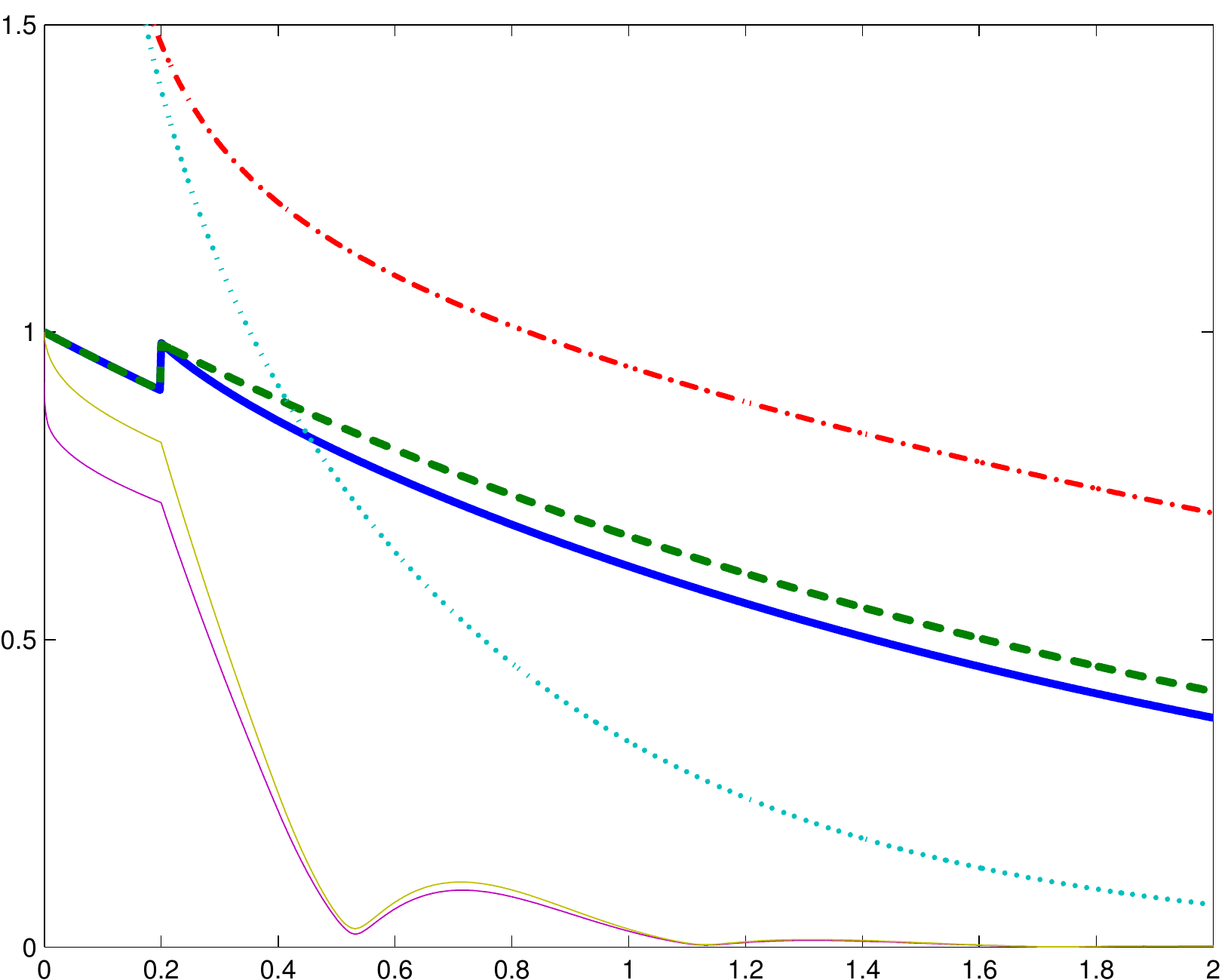}
 \caption{ Left:
           Inclusion regions for the eigenvalues of $T(z)$~\cite{bindelhood},
           the six largest eigenvalues of $A$,
           the contour $\Gamma$ used in Theorem~\ref{thm-DDEUB} (solid),
           and the vertical contour used for alternate bound~(\ref{eq-DDEUB-vert})
           (dashed).
           Contour parameters $y_0$ and $\eta$ are set to
           21.4214 and 0.0491366, respectively.
           For the contour with left-shifted vertical part (dotted),
           $y_0 = 27.7628$ and $\eta = 0.0491366$.
           Right:
           Upper bounds on the fundamental solution of~(\ref{eq-effex}):
           the upper bound described in the theorem (solid), and the
           alternates~(\ref{eq-DDEUB-vert}) (dashed),~(\ref{eq-DDEUB-nonsplit}) (dot-dashed),
           and~(\ref{eq-DDEUB-nonsplit}) with the
           contour with dotted vertical part (dotted).
           Lower solid curves are solutions to~(\ref{eq-effex}) with
           zero initial condition on $[-\tau,0)$ and various unit-norm initial
           conditions specified at $t = 0$.
         }
 \label{fig-PDDE-UB}
\end{figure}
In Figure~\ref{fig-PDDE-UB} (right), we have plotted the bound derived
using the theorem (solid), as well alternate bounds~(\ref{eq-DDEUB-vert})
(dashed),~(\ref{eq-DDEUB-nonsplit}) (dot-dashed),
and~(\ref{eq-DDEUB-nonsplit}) with the contour whose vertical part is shifted
leftward (dotted). The last bound gives better results for larger times,
as expected, but the bound given in Theorem~\ref{thm-DDEUB}
outperforms it for smaller
times and outperforms the other bounds for all times $t > \tau$.
\end{example}

In case $u([-\tau,0)) \not\equiv 0$, we can still obtain bounds
on $u(t)$ in terms of the fundamental solution $\Psi(t)$.
\begin{corollary}
\label{thm-DDEUBwithhist}
Suppose $u$ satisfies the DDE of the theorem subject
to initial conditions $u(0+) = u_0$ and $u(t) = \varphi(t)$ on
$[-\tau,0)$, with $B\varphi$ integrable. Then
\[
 \|u(t)\|_2 \le \|\Psi(t)\|_2\cdot \|u_0\|_2 +
              \sup_{0 \le \nu < \tau} \|\Psi(t-\nu)\|_2 \,
              \int_0^\tau \|B\varphi(\nu-\tau)\|_2 \,d\nu.
\]
Furthermore, this
bound is dominated by the more generous but
more explicit piecewise bound $k_1(t)\|u_0\|_2 +
k_2(t) \int_0^\tau \|B\varphi(\nu-\tau)\|_2 \,d\nu$, where
$k_1(t)$ is an upper bound on $\|\Psi(t)\|_2$ and
\[
 k_2(t) = \begin{cases}
           \displaystyle\sup_{0 < s < t} \|\exp(As)\|_2 & (0 \le t < \tau), \\
           \displaystyle\sup_{t-\tau < s < \tau} \|\exp(As)\|_2 +
           \displaystyle\sup_{\tau < s < t} \|\exp(As)\|_2 +
           e^{x_0\tau}(I_0 + C) & (\tau \le t < 2\tau), \\
           \displaystyle\sup_{t-\tau < s < t} \|\exp(As)\|_2 +
           e^{x_0(t-\tau)}\left(I_0 + \frac{C}{(t-\tau)/\tau}\right) & (t \ge 2\tau).
          \end{cases}
\]
\end{corollary}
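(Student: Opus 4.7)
The plan is to reduce the bound to the fundamental solution via a variation-of-parameters representation, then perform a careful case analysis on $t$ to obtain the piecewise bound.

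First I would establish the representation
\[
 u(t) = \Psi(t)u_0 + \int_0^\tau \Psi(t-\nu) B\varphi(\nu-\tau)\,d\nu
\]
by applying the Laplace transform to~(\ref{eq-dde}) with the history $\varphi$, handling $\calL[u(t-\tau)]$ by splitting the integral at $0$ so that $\int_{-\tau}^0 e^{-zr}\varphi(r)\,dr$ appears. Solving for $U(z)$ and recognizing $T(z)^{-1} = \calL\Psi$ yields the two pieces, and inverting the factor $Be^{-\tau z}e^{-zr}$ as a shift of $\Psi$ (with the convention $\Psi(s) = 0$ for $s < 0$) gives the displayed formula after the change of variable $\nu = r + \tau$. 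Taking norms and bounding the integrand by a supremum then immediately gives the first bound in the corollary.

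Next I would derive the explicit piecewise bound. The key facts about $\Psi$ are: $\Psi(s) = 0$ on $(-\tau,0)$, $\Psi(s) = \exp(As)$ on $[0,\tau)$, and the Theorem~\ref{thm-DDEUB} bound $\|\Psi(s)\|_2 \le \|\exp(As)\|_2 + e^{x_0 s}I_0 + e^{x_0 s}C/(s/\tau)$ holds for $s \ge \tau$. Writing $s = t - \nu$, as $\nu$ ranges over $[0,\tau)$ the argument $s$ ranges over $(t-\tau, t]$, so I would split the supremum according to where this interval sits relative to the breakpoints $0$ and $\tau$:
\begin{itemize}
 \item If $0 \le t < \tau$, the interval $(t-\tau,t]$ meets $[0,\tau)$ on $(0,t]$ (the rest contributes $0$), giving $\sup_{0<s<t}\|\exp(As)\|_2$.
 \item If $\tau \le t < 2\tau$, split $(t-\tau,t]$ at $\tau$: on $(t-\tau,\tau)$ use $\Psi = \exp(A\cdot)$; on $[\tau,t]$ use the theorem bound, and use $x_0 < 0$ and $s/\tau \ge 1$ to majorize $e^{x_0 s} \le e^{x_0\tau}$ and $C/(s/\tau) \le C$, yielding the additive term $e^{x_0\tau}(I_0+C)$.
 \item If $t \ge 2\tau$, the whole interval lies in $[\tau,\infty)$, so the theorem bound applies throughout; again using that $e^{x_0 s}$ and $C\tau/s$ are decreasing in $s$, the sup of the non-$\|\exp(As)\|_2$ terms is attained at $s = t-\tau$, giving $e^{x_0(t-\tau)}(I_0 + C/((t-\tau)/\tau))$.
\end{itemize}
In each case I separate the $\|\exp(As)\|_2$ term and leave it inside a sup over the appropriate subinterval, producing exactly the stated $k_2(t)$.

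The only mildly subtle step is the Laplace-transform derivation of the representation formula; everything afterward is bookkeeping based on where $t-\nu$ lands relative to $0$ and $\tau$, together with the monotonicity observation $x_0 < 0$ that lets the suprema of $e^{x_0 s}$ and $C/(s/\tau)$ be evaluated at the left endpoint of the relevant subinterval. No new analytic machinery beyond what Theorem~\ref{thm-DDEUB} already supplies is needed.
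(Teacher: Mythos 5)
Your proposal is correct and follows essentially the same route as the paper: the paper simply cites Hale (Ch.~1, Thm.~6.1) for the variation-of-constants formula $u(t) = \Psi(t)u_0 + \int_0^\tau \Psi(t-\nu)B\varphi(\nu-\tau)\,d\nu$ rather than rederiving it from the Laplace transform, and then performs the same case analysis on where $s = t-\nu$ falls relative to $0$ and $\tau$, using $\Psi \equiv 0$ on $(-\tau,0)$, $\Psi(s)=e^{As}$ on $[0,\tau)$, and the monotonicity of $e^{x_0 s}$ and $C\tau/s$ exactly as you describe.
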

\begin{proof}
 From~\cite[Ch. 1, Thm. 6.1]{hale1993},
 \[
  u(t) = \Psi(t)u_0 + \int_0^\tau \Psi(t-\nu)B\varphi(\nu-\tau)\,d\nu
 \]
 and the first assertion is obvious.
 Using the fact that $\Psi(t) = e^{At}$ on $0 \le t < \tau$ and $\Psi(t-\nu) = 0$ for
 $t-\nu < 0$, the integration path can be truncated to $\int_0^t$.
 Similarly, if $\tau \le t < 2\tau$, then $\Psi(t-\nu) = e^{A(t-\nu)}$ for $t-\tau < \nu < \tau$,
 and for this range of $\nu$ we have $t-\nu \ge \tau$. Finally, if $t \ge n\tau$ then
 $t-\nu \ge (n-1)\tau$ as $\nu \le \tau$. Taking norms and applying the theorem gives
 the piecewise bounds.
\end{proof}

\begin{example} \rm
We return to the model of the semiconductor laser with phase-conjugate feedback.
Since $A$ is not Hermitian, Theorem~\ref{thm-DDEUB} does not apply directly.
However, by letting $\beta$ equal the largest imaginary part of any eigenvalue
of $A$, changing the definition of $\Gamma_\infty$ so that
$x(y) = -\log\left(\eta(|y|-\beta)\right)$ instead, and
using the fact that $A = VDV^{-1}$ is diagonalizable, it is straightforward to
derive the same bound as in Theorem~\ref{thm-DDEUB} with the alteration
\[
 C = \frac{\kappa_2(V) \eta \|E\|_2 \sqrt{(\tau(y_0 - \beta))^{-1} + 1}}
          {\pi (1-\eta\|E\|_2)},
\]
where $E = V^{-1}BV$, $\kappa_2(V)$ is the 2-norm condition number of $V$, and
$y_0$ and $\eta$ were chosen to satisfy
\[
 1 < \eta(y_0-\beta) < \min\Bigg\lbrace \frac{y_0 - \beta}{\|E\|_2},
                                   e^{-\alpha(T)}, e^{-\alpha(A)}
                           \Bigg\rbrace.
\]
\begin{figure}[h]
 \includegraphics[width=0.48\linewidth]{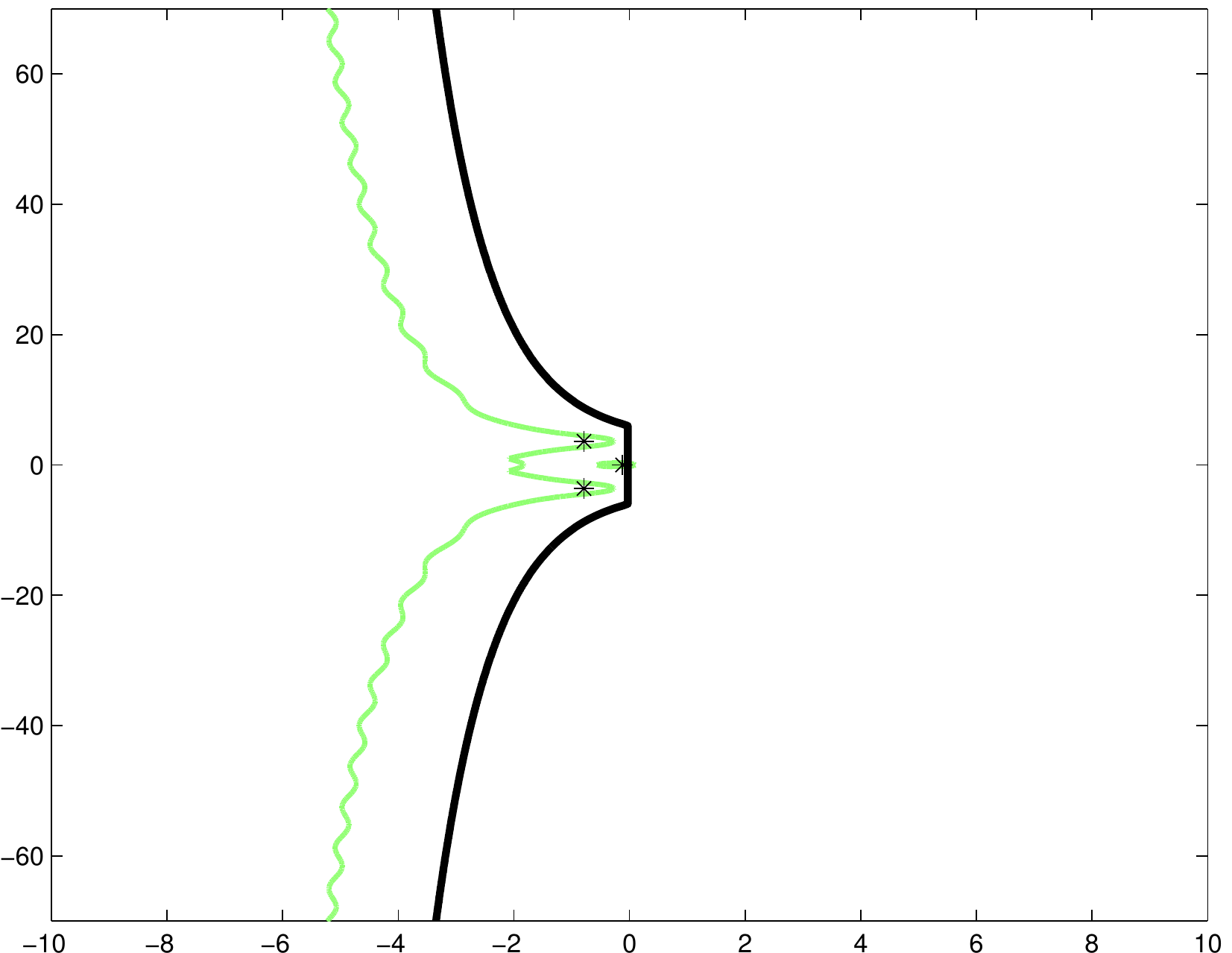}
 \hfill
 \includegraphics[width=0.48\linewidth]{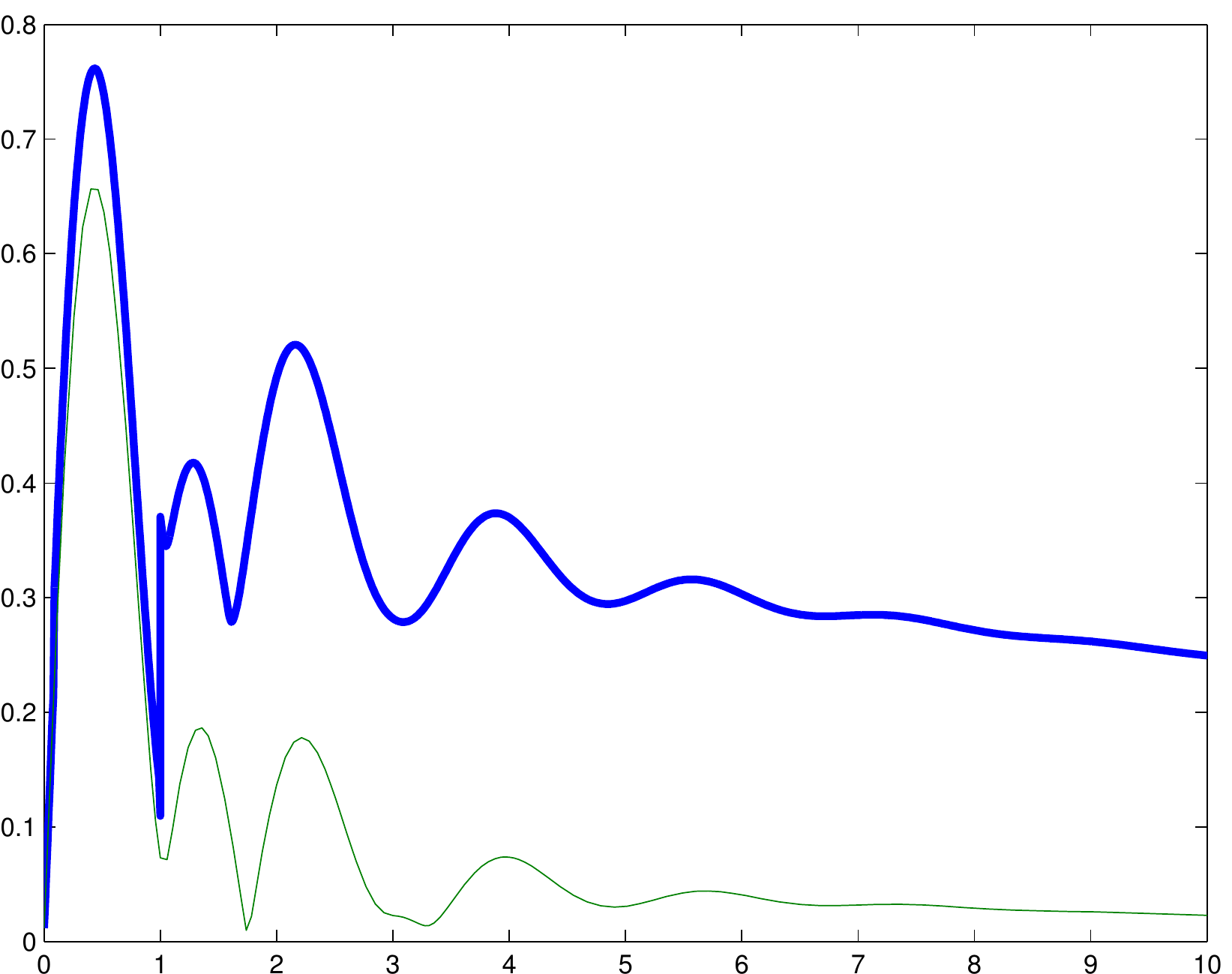}
 \caption{ Left:
           An inclusion region for the eigenvalues of $T(z)$~\cite{bindelhood},
           the three right-most eigenvalues of $T(z)$, and
           the contour $\Gamma$ (thick line).
           Contour parameters $y_0$ and $\eta$ are set to
           21.4214 and 0.0491366, respectively.
           Right:
           Upper bound on solution with initial conditions $\phi(t) \equiv u(0)$ (thick line) and the solution computed with Matlab's dde23.
         }
 \label{fig-linlaser-UB}
\end{figure}
 Inclusion regions were obtained for $\tilde{T}(z) = zI-\Lambda-Ee^{-z}$ with the
splitting $D(z) = zI-\Lambda-E_0e^{-z}$ and $E(z) = Fe^{-z}$ ($E_0 = \diag E$, $F = E-E_0$)
and an application of Theorem 3.1 in~\cite{bindelhood}. The one component
of the inclusion region intersecting the right half-plane contains exactly one
eigenvalue of $D(z)$, and therefore exactly one eigenvalue of $T(z)$,
which we have computed
using Newton's method on a bordered system~\cite[Chapter 3]{govaerts2000}.
\end{example}

\section{Lower bounds}
\label{sec-lower}

The following is an extension of
Theorem~\ref{thm-treflower}.
\begin{theorem}
\label{thm-genLB}
 Suppose we have the representations
 \[
  \Psi(t) = \frac{1}{2\pi i} \int_\Gamma T(z)^{-1} e^{zt}\,dz
  \qquad\text{and}\qquad
  T(z)^{-1} = \int_0^\infty \Psi(t) e^{-zt}\,dt
 \]
 where the latter holds for $\Re(z) > \alpha(T)$, $T$ some
 matrix-valued function, and $\Gamma$ is a (possibly unbounded)
 curve in the complex plane. Then for arbitrary $\omega \in \bbR$,
 \[
  \sup_{t \ge 0} \|\Psi(t)\| e^{-\omega t} \ge
  \frac{\alpha_\ve(T)-\omega}{\ve}
 \]
 for any $\ve > 0$ for which $\alpha_\ve(T)$ is finite.
\end{theorem}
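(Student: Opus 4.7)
The plan is to mirror the proof of Theorem \ref{thm-treflower} almost verbatim, replacing the matrix resolvent $(zI-M)^{-1}$ by the generalized resolvent $T(z)^{-1}$ and using the two hypothesized integral representations in place of (\ref{eq-resM}) and (\ref{eq-etM}). Fix $\ve > 0$ for which $\alpha_\ve(T)$ is finite. The case $\omega \ge \alpha_\ve(T)$ is trivial because the right-hand side is nonpositive, so I will assume $\omega < \alpha_\ve(T)$. I then argue by contrapositive: show that whenever $\|\Psi(t)\|e^{-\omega t} \le C$ for all $t \ge 0$, one has $\alpha_\ve(T) - \omega \le C\ve$. Specializing to $C = \sup_{t \ge 0}\|\Psi(t)\|e^{-\omega t}$ (or observing that the conclusion is vacuous if that supremum is infinite) then produces the stated bound.

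The heart of the argument is the following calculation. Pick any $z \in \Lambda_\ve(T)$ satisfying $\Re z > \max(\alpha(T), \omega)$; such $z$ exist by the discussion below. The Laplace-type representation in the hypothesis applies for $\Re z > \alpha(T)$, so that
\[
 \|T(z)^{-1}\| \le \int_0^\infty \|\Psi(t)\|\, e^{-\Re(z)\, t}\,dt \le \int_0^\infty C\, e^{-(\Re(z)-\omega)\,t}\,dt = \frac{C}{\Re(z)-\omega}.
\]
At the same time, membership $z \in \Lambda_\ve(T)$ forces $\|T(z)^{-1}\| > \ve^{-1}$. Combining the two inequalities yields $\Re(z) - \omega < C\ve$. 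This uses nothing beyond the definitions of $\Lambda_\ve(T)$ and $\alpha_\ve(T)$, plus the fact that the norm passes through the integral once $\|\Psi(t)\|e^{-\omega t}$ is bounded and $\Re z > \omega$ makes the integrand absolutely integrable.

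To finish, I would take the supremum over admissible $z$. Since $\Lambda_\ve(T)$ is defined by a strict inequality it is open, and the definition $\alpha_\ve(T) = \sup_{z \in \Lambda_\ve(T)}\Re z$ implies that for every $r < \alpha_\ve(T)$ there is some $z \in \Lambda_\ve(T)$ with $\Re z > r$. Applying this with $r = \max(\alpha(T), \omega)$, which is strictly less than $\alpha_\ve(T)$ by our standing assumption together with $\alpha(T) \le \alpha_\ve(T)$, produces a sequence of admissible $z$ with $\Re z \to \alpha_\ve(T)$. Passing to the supremum in $\Re(z) - \omega < C\ve$ gives $\alpha_\ve(T) - \omega \le C\ve$, as required.

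The main potential obstacle is the mild edge case $\alpha(T) = \alpha_\ve(T)$, where the constraint $\Re z > \alpha(T)$ needed for the Laplace representation could in principle conflict with approaching $\alpha_\ve(T)$. This is handled by noting that openness of $\Lambda_\ve(T)$ around any eigenvalue $z_0$ of $T$ (where $\|T(z)^{-1}\|$ blows up) guarantees points of $\Lambda_\ve(T)$ in every neighborhood of $z_0$, and in particular points with $\Re z > \Re z_0$. Beyond this, I expect no real difficulty: once the two integral representations are granted as hypotheses, the proof is structurally identical to the ODE case and consists almost entirely of bookkeeping.
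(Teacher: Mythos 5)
Your proposal is correct and follows essentially the same route as the paper's proof: assume $\|\Psi(t)\| \le Ce^{\omega t}$, use the Laplace-type representation to bound $\|T(z)^{-1}\| \le C/(\Re z - \omega)$ for $z \in \Lambda_\ve(T)$ with $\Re z > \max(\alpha(T),\omega)$, combine with $\|T(z)^{-1}\| > \ve^{-1}$, and pass to the supremum. Your extra care about approaching $\alpha_\ve(T)$ while keeping $\Re z > \alpha(T)$ is a detail the paper leaves implicit, but it does not change the argument.
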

\begin{proof}
 Suppose $\|\Psi(t)\| \le Ce^{\omega t}$ for all $t \ge 0$
 and fix $\ve > 0$.
 Without loss of generality, suppose that $\omega <
 \alpha_\ve(T)$, and take $z \in \Lambda_\ve(T)$
 such that $\Re z > \alpha(T)$ and $\Re z > \omega$.
 Then by the representation for $T(z)^{-1}$,
 the bound on $\Psi(t)$ implies
 $\|T(z)^{-1}\| \le \frac{C}{\Re(z) - \omega}$.
 Then $\|T(z)^{-1}\| > \ve^{-1}$
 by definition of $\Lambda_\ve(T)$, which
 implies $\Re(z) - \omega < C\ve$. It
 follows that $\alpha_\ve(T) \le C\ve + \omega$. By the
 contrapositive, $\alpha_\ve(T) > C\ve + \omega$ implies
 the desired result.
\end{proof}

The following proposition is easier to use in practice.
\begin{proposition}\label{thm-practicalLB}
 For each $x > 0$, $\sup_{t\ge 0} \|\Psi(t)\|
 \ge x \sup_{y \in \bbR}  \|T(x+iy)^{-1}\|$.
\end{proposition}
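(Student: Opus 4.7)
The plan is to work directly from the Laplace-transform representation $T(z)^{-1} = \int_0^\infty \Psi(t) e^{-zt}\,dt$ that is assumed as a hypothesis in Theorem~\ref{thm-genLB}. For $z = x+iy$ with $x > 0$ in the half-plane where this representation is valid, I would pass the norm under the integral and bound $\|\Psi(t)\|$ pointwise by its supremum, giving
\[
 \|T(x+iy)^{-1}\| \le \sup_{t \ge 0} \|\Psi(t)\| \cdot \int_0^\infty e^{-xt}\,dt
                   = \frac{1}{x}\sup_{t\ge 0}\|\Psi(t)\|.
\]
Rearranging and taking the supremum over $y \in \bbR$ would then deliver the claim.

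The only real subtlety is whether the Laplace representation actually holds at $x+iy$ for every $x>0$, since the hypothesis of Theorem~\ref{thm-genLB} only promises it for $\Re z > \alpha(T)$. I would handle this by a dichotomy: if $\sup_{t\ge 0}\|\Psi(t)\| = \infty$, the inequality is trivial; if the supremum is finite, then $\Psi$ is bounded on $[0,\infty)$, the Laplace integral converges absolutely for every $\Re z > 0$, and analytic continuation (or simply the absolute convergence argument) justifies the one-line bound above at every $z$ with $\Re z = x > 0$.

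Alternatively, the proposition can be obtained as a corollary of Theorem~\ref{thm-genLB} itself with $\omega = 0$: if $\ve < 1/\|T(x+iy)^{-1}\|$, then $x+iy \in \Lambda_\ve(T)$ and hence $\alpha_\ve(T) \ge x$, so Theorem~\ref{thm-genLB} gives $\sup_{t\ge 0}\|\Psi(t)\| \ge \alpha_\ve(T)/\ve \ge x/\ve$; sending $\ve \uparrow 1/\|T(x+iy)^{-1}\|$ and taking the supremum over $y$ yields the same bound. I expect the first route to be the cleanest; the main (very mild) obstacle is just the convergence bookkeeping for the Laplace integral, after which the estimate is a one-liner.
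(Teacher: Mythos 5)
Your primary route is correct and is genuinely different from (and more elementary than) the paper's. The paper proves this proposition by invoking Theorem~\ref{thm-genLB} with $\omega = 0$: fix $x$ and $y$, set $\ve^{-1} = \|T(x+iy)^{-1}\|$ so that $\alpha_\ve(T) \ge x$, and conclude $\sup_{t\ge 0}\|\Psi(t)\| \ge x\ve^{-1}$. You instead go straight to the Laplace representation and observe
\[
 \|T(x+iy)^{-1}\| \le \int_0^\infty \|\Psi(t)\| e^{-xt}\,dt \le \frac{1}{x}\sup_{t\ge 0}\|\Psi(t)\|,
\]
which is exactly the estimate hiding inside the proof of Theorem~\ref{thm-genLB} (there it appears as $\|T(z)^{-1}\| \le C/(\Re z - \omega)$), so you have simply unwound the intermediate theorem. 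What your route buys is transparency: you never need to introduce $\ve$, $\Lambda_\ve(T)$, or worry about whether $\alpha_\ve(T)$ is finite, and the boundary case $\|T(x+iy)^{-1}\| = \ve^{-1}$ versus the strict inequality in the definition of $\Lambda_\ve(T)$ (which the paper quietly glosses over) never arises. What the paper's route buys is uniformity of presentation, since the proposition is then literally a specialization of the general lower bound. Your handling of the domain-of-validity issue (trivial if $\sup\|\Psi\|=\infty$; otherwise absolute convergence plus analytic continuation extends the representation to $\Re z > 0$) is adequate, and in the paper's setting $\alpha(T)<0$ is assumed anyway, so the representation already holds on the whole right half-plane.

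One small correction to your alternative route: with the paper's convention $\Lambda_\ve(T) = \lbrace z : \|T(z)^{-1}\| > \ve^{-1}\rbrace$, the point $x+iy$ lies in $\Lambda_\ve(T)$ precisely when $\ve > 1/\|T(x+iy)^{-1}\|$, not when $\ve$ is \emph{less} than that threshold (with your version, letting $\ve \to 0$ would give $\sup_t\|\Psi(t)\| \ge x/\ve \to \infty$, which is absurd). The fix is to take $\ve \downarrow 1/\|T(x+iy)^{-1}\|$ from above; the limit of $x/\ve$ is still $x\|T(x+iy)^{-1}\|$, so the conclusion is unaffected.
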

\begin{proof}
Fix $x$ and let $y \in \bbR$ be arbitrary.
Set $\|T(x+iy)^{-1}\| = \ve^{-1}$. Then $\alpha_\ve(T) \ge x$.
From Theorem~\ref{thm-genLB},
$\sup_{t\ge 0} \|\Psi(t)\| \ge x \ve^{-1}$.
For fixed $x$, the right-hand side is maximized by finding
$\ve$ as small as possible, i.e. by finding $y$ such
that $\|T(x+iy)^{-1}\|$ is as large as possible.
\end{proof}
\begin{remark}
 Note that for a given $x$ we may only need to check a finite
range of $y$ values. This can be shown by proving that for
$|y|$ sufficiently large $\|T(x+iy)^{-1}\| \le \|T(x)^{-1}\|$,
for example.
\end{remark}

\begin{example} \rm
 We now give lower bounds on worst-case growth for
 our two examples. In the case of the
 discretized PDDE, we use the fact that $A$ is Hermitian
 to derive $\|T(x+iy)^{-1}\|_2 \le \|T(x)^{-1}\|_2$ for
 $|y| > |x| + \|B\|_2 e^{-\tau x} + \sigma_{\min}(T(x))$ as
 per the previous remark, and check 100 equally spaced
 $x$ values in $[5,10]$ for the largest lower bound given by
 Proposition~\ref{thm-practicalLB}.
 For the linearization of the laser
 example, where $A$ is not Hermitian, we use instead that
 $\|T(x+iy)^{-1}\|_2 \le \|T(x)^{-1}\|_2$ for
 $|y| > \|xI-A\|_2 + \|B\|_2 e^{-x} + \sigma_{\min} (T(x))$ and
 check an equally spaced 100 point mesh of $[1,5]$.

\begin{figure}[h]
 \includegraphics[width=0.48\linewidth]{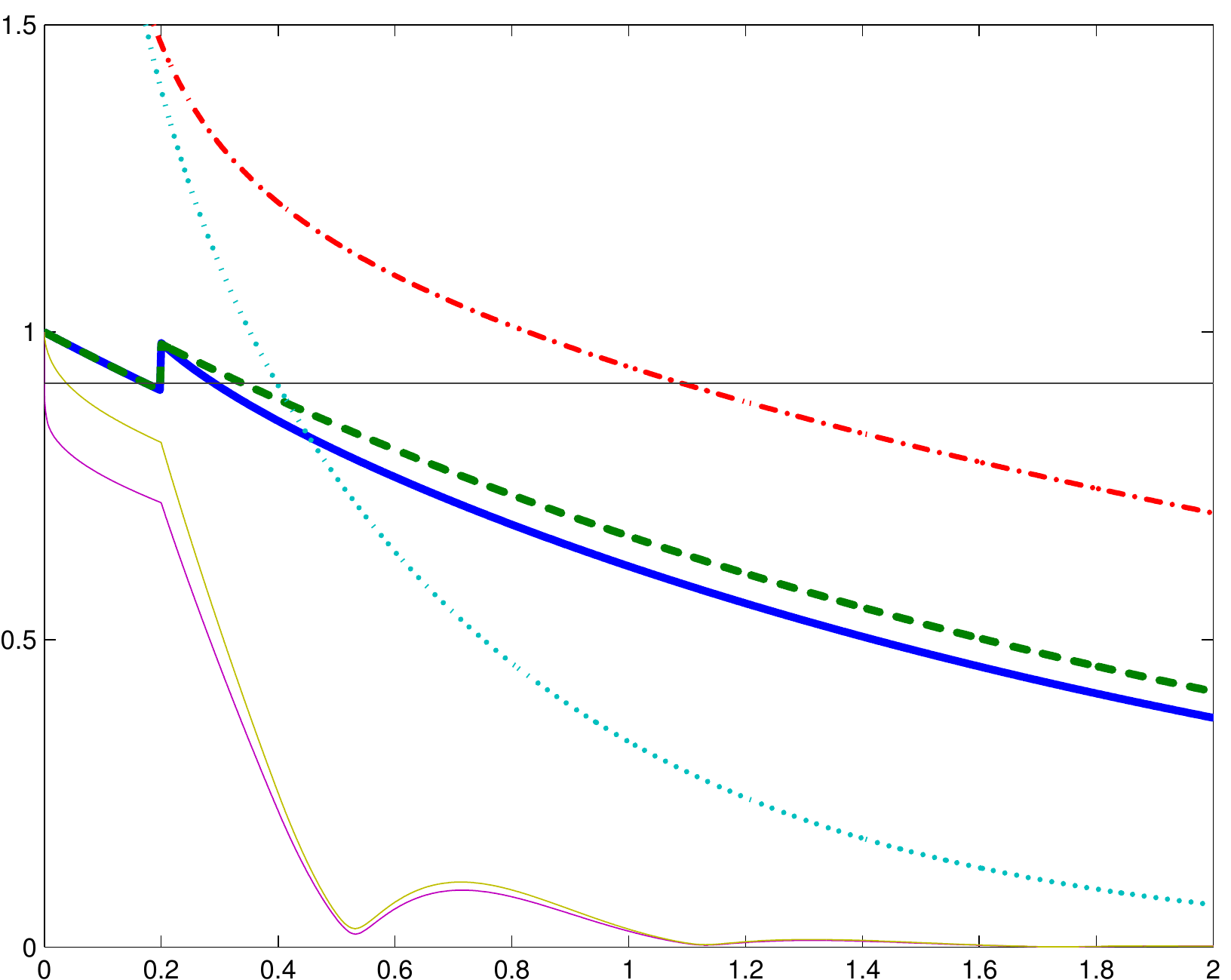}
 \hfill
 \includegraphics[width=0.48\linewidth]{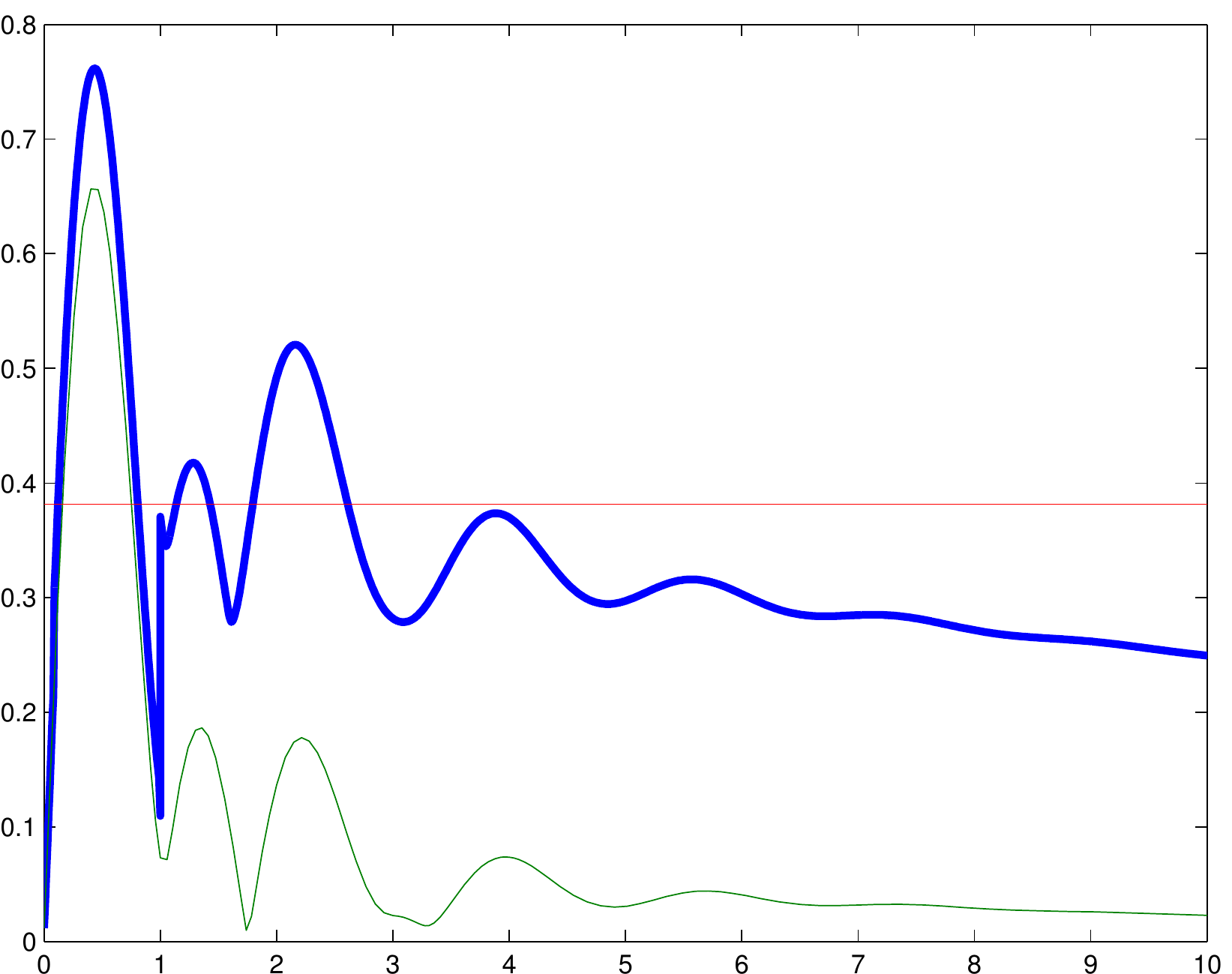}
 \caption{ Left:
           Discretized PDDE example with lower bound for solutions
           with $\|u(0)\|_2 = 1$.
           Notice that one but not both of the plotted solutions
           for the discretized partial DDE have supremum above
           the given lower bound.
           Right:
           Linearization of laser example with lower bound for
           solutions with $\|u(0)\|_2 = 0.0015\|(E_x,E_y,N)^T\|_2$.
           The solution to the linearized
           system from the laser model departs significantly from
           the equilibrium before decaying asymptotically. Unless
           the truncated nonlinear part of the original laser model
           is guaranteed to stay small under departures which
           differ from the equilibrium by 0.38242 in norm, the
           applicability of the linear stability analysis to
           this equilibrium and these initial conditions is
           questionable.
         }
 \label{fig-linlaser-UB}
\end{figure}
\end{example}

\section{Conclusion}
\label{sec-conc}
Some practical, pointwise upper bounds on
solutions to higher-order ODEs and single, constant
delay DDEs have been demonstrated on a discretized
partial DDE and a DDE model of a semiconductor
laser with phase-conjugate feedback. A general
lower bound was stated and used to concretely
bound worst-case transient growth for both
examples with a small computational effort.
Effective techniques for localizing eigenvalues
rather than computing them were used in an
auxiliary fashion.




\bibliographystyle{siam}
\bibliography{refs}

\end{document}